\documentclass[12pt]{amsart}

\usepackage{amssymb, bm,color, ytableau}
\textwidth=15cm
\textheight=22cm
\topmargin=0.5cm
\oddsidemargin=0.5cm
\evensidemargin=0.5cm

\newtheorem{theorem}{Theorem}[section]
\newtheorem{proposition}[theorem]{Proposition}
\newtheorem{lemma}[theorem]{Lemma}
\newtheorem{corollary}[theorem]{Corollary}

\theoremstyle{definition}

\newtheorem{example}[theorem]{Example}
\newtheorem{problem}[theorem]{Problem}

\newtheorem{remark}[theorem]{Remark}

\numberwithin{equation}{section}

\makeatletter

\@namedef{subjclassname@2020}{%

\textup{2020} Mathematics Subject Classification}

\makeatother

\begin{document}

\title[Gr\"obner bases of radical Li-Li type ideals]{Gr\"obner bases of radical Li-Li type ideals associated with partitions}

\author{Xin Ren}
\address{
Xin Ren,
Department of Mathematics, 
Kansai University, Suita-shi, Osaka 564-8680, Japan.
}
\email{k641241@kansai-u.ac.jp}

\author{Kohji Yanagawa}
\address{
Kohji Yanagawa,
Department of Mathematics, 
Kansai University, Suita-shi, Osaka 564-8680,  Japan.
}
\email{yanagawa@kansai-u.ac.jp}

\thanks{The second authors is partially supported by JSPS KAKENHI Grant Number 22K03258.}
\date{\today}
\keywords{ Gr\"obner bases, Specht polynomial, Specht ideal, Li-Li ideal}
\subjclass[2020]{13P10, 05E40}

\newcommand{\add}[1]{\ensuremath{\langle{#1}\rangle}}
\newcommand{\tab}[1]{\ensuremath{{\rm Tab}({#1})}}
\newcommand{\wtab}[1]{\ensuremath{\widehat{\rm Tab}({#1})}}
\newcommand{\stab}[1]{\ensuremath{{\rm STab}({#1})}}
\newcommand{\ini}[1]{\ensuremath{{\rm in}({#1})}}
\newcommand{\Ini}{\operatorname{in}}
\newcommand{\Fc}{{\mathcal F}}
\newcommand{\Yc}{{\mathcal Y}}
\newcommand{\Ib}{{\mathbf I}}
\newcommand{\Vb}{{\mathbf V}}
\newcommand{\fS}{{\mathfrak S}}
\newcommand{\cS}{{\mathcal S}}
\newcommand{\cmpl}{{\sf c}}
\newcommand{\NN}{{\mathbb N}}
\newcommand{\ZZ}{{\mathbb Z}}
\newcommand{\wT}{{\widetilde{T}}}
\newcommand{\wlam}{{\widetilde{\lambda}}}
\newcommand{\sh}{\operatorname{sh}}
\newcommand{\Tc}{{\mathcal T}}
\newcommand{\wTc}{{\widetilde{\Tc}}}
\def\<{\langle}
\def\>{\rangle}
\def\too{\longrightarrow}

\maketitle

\begin{abstract}
For a partition $\lambda$ of $n$, the {\it Specht ideal} $I_\lambda \subset K[x_1, \ldots, x_n]$ is the ideal generated by all Specht polynomials of shape $\lambda$.   
In their unpublished manuscript, Haiman and Woo showed that $I_\lambda$ is a radical ideal, and gave its universal Gr\"obner basis (Murai et al. published a quick proof of this result). 
On the other hand, an old paper of Li and Li studied analogous ideals, while their ideals are not always radical.  
The present paper introduces a class of ideals generalizing both Specht ideals and {\it radical} Li-Li ideals, and studies their radicalness and Gr\"obner bases. 
\end{abstract}

\section{Introduction}
Let $S=K[x_1, \ldots, x_n]$ be a polynomial ring over an infinite field $K$. For a subset $A=\{a_1, a_2, \ldots, a_m\}$ of $[n]:=\{1,2, \ldots, n\}$, let 
$$\Delta(A):=\prod_{1 \le i <j \le m}(x_{a_i} -x_{a_j}) \in S$$
be the difference product. 
For a sequence of subsets $\Yc=(Y_1, Y_2, \ldots, Y_{k-1})$ with $[n]\supset Y_1 \supset Y_2 \supset \cdots \supset Y_{k-1}$, Li and Li \cite{LL} studied 
the ideal 
\begin{equation}\label{I_Y}
I_\Yc:=\left( \, \prod_{i=1}^{k-1} \Delta(X_i) \, \middle | \,   X_i \supset Y_i \ \text{for all} \ i,  \  \bigcup_{i=1}^{k-1} X_i=[n] \, \right )
\end{equation}
of $S$ (more precisely, the polynomial ring in \cite{LL} is $\ZZ[x_1, \ldots, x_n]$). 
Among other things, they showed the following. 

\begin{theorem}[{c.f. Li-Li \cite[Theorem~2]{LL}}]
With the above notation, $I_\Yc$ is a radical ideal if and only if $\# Y_2 \le 1$. 
\end{theorem}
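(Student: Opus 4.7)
\medskip

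\noindent The plan is to prove the two implications separately, both resting on the following divisibility observation: since $X_1 \supset Y_1 \supset Y_2$ and $X_2 \supset Y_2$, each of $\Delta(X_1)$ and $\Delta(X_2)$ is divisible by $\Delta(Y_2)$, so $\Delta(Y_2)^2$ divides every generator $\prod_i \Delta(X_i)$ of $I_\Yc$ in~\eqref{I_Y}. The polynomial $\Delta(Y_2)$ is nonconstant precisely when $\#Y_2 \ge 2$, and this doubled factor is what drives the non-radicality.

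For the necessity, assume $\#Y_2 \ge 2$ and let $m$ denote the minimum, over all generators of $I_\Yc$, of the multiplicity of $\Delta(Y_2)$ as a divisor. The observation above gives $m \ge 2$. Pick a generator $g$ attaining this minimum and set $h := g/\Delta(Y_2)$; then
\[
h^2 \;=\; g \cdot \bigl(g/\Delta(Y_2)^2\bigr) \;\in\; I_\Yc,
\]
since $g/\Delta(Y_2)^2$ is a genuine polynomial ($m \ge 2$). On the other hand, every element of $I_\Yc$ is divisible by $\Delta(Y_2)^m$, whereas $\Delta(Y_2)$ divides $h$ with multiplicity only $m-1$, so $h \notin I_\Yc$ and $I_\Yc$ fails to be radical.

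For the sufficiency, assume $\#Y_2 \le 1$. The doubling phenomenon collapses (here $\Delta(Y_2) = 1$), and I would try to show directly that $I_\Yc$ is the intersection of primes cut out by linear equations of the form $x_a = x_b$. Concretely, one identifies $V(I_\Yc)$ as a union of coordinate subspaces and proves $I_\Yc$ equals the intersection of the corresponding primes, ideally via a Gr\"obner basis with a squarefree initial ideal. When $Y_2 = \emptyset$ the generators are close to Specht polynomials and the Haiman--Woo radicality theorem (quoted in the abstract) should apply once the coverage condition $\bigcup_i X_i = [n]$ is handled; when $\#Y_2 = 1$, isolating the unique element of $Y_2$ reduces the analysis to an effectively similar case.

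The main obstacle is the sufficiency direction: the coverage condition forces the $X_i$'s to overlap, so the generators are not shape-homogeneous as in the Specht case, and the combinatorics of the primary decomposition become delicate. Producing a clean Gr\"obner basis or prime intersection in this mixed regime is essentially the technical core of the present paper, for which this theorem serves as the motivating starting point.
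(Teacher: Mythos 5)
This theorem is a background result attributed to Li--Li \cite[Theorem~2]{LL}; the present paper cites it without giving its own proof, so there is no argument in the paper to compare against. Evaluating your proposal on its merits, then:

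Your \emph{necessity} direction (if $\#Y_2 \geq 2$ then $I_\Yc$ is not radical) is correct and self-contained. Since $Y_2 \subset Y_1 \subset X_1$ and $Y_2 \subset X_2$, every generator is indeed divisible by $\Delta(Y_2)^2$, so the minimum multiplicity $m$ is at least $2$ and, being a minimum over a finite nonempty set of generators, is well-defined. The witness $h = g/\Delta(Y_2)$ satisfies $h^2 = g\cdot(g/\Delta(Y_2)^2) \in I_\Yc$, while $h$ is a nonzero polynomial whose $\Delta(Y_2)$-multiplicity is $m-1 < m$; since every nonzero element of $I_\Yc$ inherits multiplicity at least $m$ from the generators, $h \notin I_\Yc$. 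This is a clean UFD argument and closes that direction.

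The \emph{sufficiency} direction ($\#Y_2 \leq 1 \Rightarrow I_\Yc$ radical) is a genuine gap, as you yourself acknowledge. The appeal to Haiman--Woo only covers the degenerate subcase $\#Y_1 \leq 1$, $Y_2 = \cdots = Y_{k-1} = \emptyset$, where $I_\Yc$ collapses to a Specht ideal via \cite[Corollary~3.2]{LL}; it does not handle $\#Y_1 \geq 2$, nor $\#Y_2 = 1$, and a priori the Li--Li generators $\prod_i \Delta(X_i)$ run over $X_i$ of \emph{unconstrained} cardinalities, so they are not the fixed-shape Specht polynomials that Theorem~\ref{HWMOY} addresses. (This paper's Theorem~\ref{main2} and Corollary~\ref{I_{l,m,lambda}} identify $\sqrt{I_{l,m,\lambda}}$ with certain $I_\Yc$'s, which reproves the radicality claim for a subfamily, but that machinery is what you would need to build, not cite.) To complete the statement you either have to prove this direction from scratch---most plausibly via a Gr\"obner basis with squarefree initial ideal, as you suggest---or simply defer to \cite{LL} the way the paper does.
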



A {\it partition} of a positive integer $n$ is a non-increasing sequence of positive integers $\lambda=(\lambda_1,\ldots,\lambda_p)$ with $\lambda_1+ \cdots+\lambda_p=n$. Let $P_n$ be the set of all partitions of $n$. 
A partition $\lambda$ is frequently represented by its Young diagram. For example, $(4,2,1)$ is represented as $\ytableausetup{mathmode, boxsize=0.5em}\ydiagram {4,2,1}$. A {\it (Young) tableau} of shape $\lambda  \in P_n$ is a bijective filling of the squares of the Young diagram of  $\lambda$ by the integers in $[n]$. 
For example, 
$$
\ytableausetup{mathmode, boxsize=1.2em}
\begin{ytableau}
4 & 3 & 1&7   \\
5 & 2    \\
6  \\  
\end{ytableau}
$$
is a tableau of shape $(4,2,1)$. Let $\tab \lambda$ be the set of all tableaux of shape $\lambda$. 
Recall that the Specht polynomial $f_T$ of $T \in \tab \lambda$ is $\prod_{j=1}^{\lambda_1} \Delta(T(j))$, where $T(j)$ is the set of the entries of the $j$-th column of $T$ (here the entry in the $i$-th row is the $i$-th element of $T(j)$). 
For example, if $T$ is the above tableau,   
then $f_T=(x_4-x_5)(x_4-x_6)(x_5-x_6)(x_3-x_2)$.  

We call the ideal 
$$I_\lambda:=(f_T \mid T \in \tab{\lambda}) \subset S$$
the {\it Specht ideal} of $\lambda$. These ideals have been studied from several points of view (and under several names and characterizations), see for example, \cite{BGS, MW,MRV, SY}. 
The following is an unpublished result of Haiman and Woo (\cite{HW}), to which Murai,  Ohsugi and the second author (\cite{MOY}) published a quick proof. 

\begin{theorem}[{Haiman-Woo \cite{HW}, see also \cite{MOY}}]\label{HWMOY}
If $\Fc \subset P_n$ is a lower filter with respect to the dominance order  $\unlhd$, then  $I_\Fc:=\sum_{\lambda \in \Fc} I_\lambda$ is a radical ideal, for which $\{ f_T \mid T \in  \tab{\mu}, \mu \in \Fc  \}$ forms a universal Gr\"obner basis (i.e., a Gr\"obner basis with respect to all monomial orders). In particular, $I_\lambda$ is a radical ideal, for which $\{ f_T \mid T \in  \tab{\mu}, \mu \unlhd \lambda  \}$ forms a universal Gr\"obner basis. 
\end{theorem}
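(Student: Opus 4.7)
The plan is to describe the variety $V(I_\Fc)\subset K^n$ as a union of linear subspaces and reduce the theorem to a combinatorial identity of monomial ideals that holds uniformly in the monomial order. Since $f_T$ vanishes at $(a_1,\ldots,a_n)$ precisely when some column of $T$ has two positions $i,j$ with $a_i=a_j$, one obtains $V(I_\Fc)=\bigcup_{\pi\in\Pi_\Fc} L_\pi$, where $L_\pi=\{x_i=x_j \mid i\sim_\pi j\}$ and $\Pi_\Fc$ is the set of set partitions $\pi$ of $[n]$ such that every $T\in\tab\mu$ with $\mu\in\Fc$ has two column-entries sharing a block of $\pi$. The lower-filter hypothesis on $\Fc$ should allow one to recast membership in $\Pi_\Fc$ as a dominance condition on the block-size partition of $\pi$.

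Consequently $\sqrt{I_\Fc}=\bigcap_{\pi\in\Pi_\Fc} I(L_\pi)$, and each linear prime $I(L_\pi)=(x_i-x_j \mid i\sim_\pi j)$ has squarefree monomial initial ideal under any monomial order $<$. Fixing $<$, the crux is the identity
\[
J := (\ini{f_T} \mid T\in\tab\mu,\ \mu\in\Fc) \;=\; \bigcap_{\pi\in\Pi_\Fc} \ini{I(L_\pi)}.
\]
Granted this, the chain
\[
J \;\subseteq\; \ini{I_\Fc} \;\subseteq\; \ini{\sqrt{I_\Fc}} \;\subseteq\; \bigcap_{\pi\in\Pi_\Fc} \ini{I(L_\pi)} \;=\; J
\]
collapses to equality. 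Hence $\ini{I_\Fc}$ is squarefree; since $I_\Fc\subseteq\sqrt{I_\Fc}$ share an initial ideal they have the same Hilbert function, forcing $I_\Fc=\sqrt{I_\Fc}$ to be radical. Simultaneously, $J=\ini{I_\Fc}$ says $\{f_T\}$ is a Gr\"obner basis for $<$, and hence universal since $<$ was arbitrary.

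The principal obstacle is the identity $J=\bigcap_{\pi\in\Pi_\Fc}\ini{I(L_\pi)}$. The inclusion $\subseteq$ is routine: by construction of $\Pi_\Fc$, each $f_T$ vanishes on every $L_\pi$, so $\ini{f_T}\in\ini{I(L_\pi)}$. The reverse inclusion $\supseteq$ demands that every squarefree monomial $m$ in the intersection be divisible by some $\ini{f_T}$. My plan is to construct from $m$ and $<$ a canonical set partition $\pi_m$ of $[n]$ built to be maximally compatible with $m$, and then to argue that if $\pi_m\notin\Pi_\Fc$ then the failed dominance inequality, combined with the lower-filter property of $\Fc$, produces some $T\in\tab\mu$ with $\mu\in\Fc$ and $\ini{f_T}\mid m$. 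Carrying this out uniformly in $<$, and pinning down the precise dominance dictionary relating $\Pi_\Fc$ to $\Fc$, is where the real combinatorial work lies.
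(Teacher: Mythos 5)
Your central identity $J=\bigcap_{\pi\in\Pi_\Fc}\ini{I(L_\pi)}$ is false, and so is its corollary that $\ini{I_\Fc}$ is squarefree, so the sandwich cannot close. Take $n=3$ and $\Fc=\{(1,1,1)\}$ (a lower filter, being the minimum of $P_3$). Then $I_\Fc=(\Delta_3)$ with $\Delta_3=(x_1-x_2)(x_1-x_3)(x_2-x_3)$, and under lex with $x_1<x_2<x_3$ one has $J=(\ini{\Delta_3})=(x_2x_3^2)$, which is not squarefree. On the other side, $\Pi_\Fc$ consists of all set partitions of $[3]$ having a block of size at least two, and $\bigcap_{\pi\in\Pi_\Fc}\ini{I(L_\pi)}=(x_2)\cap(x_3)\cap(x_3)\cap(x_2,x_3)=(x_2x_3)$, which strictly contains $J$. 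The obstruction is structural: each $\ini{I(L_\pi)}$ is generated by variables, so the intersection is a squarefree monomial ideal, whereas $\ini{f_T}$ is a product of the (possibly repeated) leading variables of the factors $(x_i-x_j)$ and is typically not squarefree; a squarefree monomial can never be divisible by a non-squarefree one, so your planned ``every $m$ in the intersection is divisible by some $\ini{f_T}$'' is impossible whenever no relevant $\ini{f_T}$ is squarefree. Note also that $(\Delta_3)$ is radical because $\Delta_3$ is a squarefree \emph{polynomial}, not because its initial ideal is a squarefree \emph{monomial} ideal; your reduction conflates these two notions.

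The correct route, taken in \cite{MOY} and generalized in Theorem~\ref{main1} of the present paper, is a direct induction on the number of variables and never passes through the minimal-prime decomposition. One defines the vanishing ideal of the relevant union of strata, writes $f=\sum_{i\le d} g_i x_n^i$ for $f$ in that ideal, and proves by a pure evaluation argument (a degree-$d$ polynomial in $x_n$ with $d+1$ distinct roots is zero; see Lemma~\ref{keylemma}) that each $g_i$ lies in the analogous vanishing ideal in $n-1$ variables attached to the smaller filter $\Fc_{d+1}$. The inductive hypothesis then produces a standard tableau $T$ with $\ini{f_T}\mid\ini{g_d}$, and appending the letter $n$ to $T$ gives $T'$ with $\ini{f_{T'}}=x_n^q\ini{f_T}$, $q\le d$, dividing $\ini{f}$. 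Radicalness falls out because the Gr\"obner-basis computation identifies $I_\Fc$ with the visibly radical vanishing ideal. Universality is a separate observation: by the $\cS_n$-symmetry of $I_\Fc$ and the fact that the $f_T$ are products of linear forms, it suffices to treat one lex order (cf.\ Lemma~\ref{product of linear forms}).
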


Let us explain why the second assertion follows form the first. Since  $\lambda \unrhd \mu$ for $\lambda, \mu \in P_n$ implies $I_\lambda \supset I_\mu$ (c.f. Lemma~\ref{inclusion}), we have  $I_\lambda=I_\Fc$ for the lower filter $\Fc:=\{ \mu \in P_n \mid \mu \unlhd \lambda \}$. 

The Li-Li ideals $I_\Yc$ and the Specht ideals $I_\lambda$ share common examples. In fact,  for  $\Yc=(Y_1, Y_2, \ldots, Y_{k-1})$ with $\# Y_1 \le 1$, $Y_2=\cdots =Y_{k-1}=\emptyset$ and $\lambda=(\lambda_1, \ldots, \lambda_p) \in P_n$ with  $\lambda_1=\cdots =\lambda_{p-1}=k-1$, we have  $I_\Yc=I_\lambda$ by \cite[Corollary~3.2]{LL}.

In this paper, we study a common generalization of the {\it radical} Li-Li ideals and the Specht ideals, for which {\it almost} direct analogs of Theorem~\ref{HWMOY}  hold. 
For example, in Sections 2 and 3, we take a positive integer $l$, and a partition $\lambda \in P_{n+l-1}$ with $\lambda_1 \ge l$, and consider tableaux like 
\begin{equation}\label{Tab(l)}
\ytableausetup{mathmode, boxsize=1.2em}
\begin{ytableau}
1& 1& 1& 1& 2 & 3   \\
4 & 5   & 8  \\
6 & 7\\
\end{ytableau}
\end{equation}
($l=4$ in this case). Using these tableau, we define the ideal $I_{l, \lambda}$. 

The symmetric group $\cS_{n-1}$ of the set $\{2, \ldots, n\}$ still acts on $I_{l, \lambda}$, so our ideals have representation theoretic interest. The following are other motivations of the present paper. 
\begin{itemize}
\item[(1)] Recently, the defining ideals of subspace arrangements have been intensely studied (c.f. \cite{BPS, CT,S}). Our $I_{l, \lambda}$ and its generalization 
$\sqrt{I_{l,m,\lambda}}$ introduced in Section 4 give new classes of these ideals. Note that $I_{l,m,\lambda}$ is not a radical ideal in general, while Corollary~\ref{I_{l,m,lambda}} gives the generators of  its radical explicitly. 
\item[(2)] A universal Gr\"obner basis is very important, since it is closely related to the Gr\"obner fan.   While we can use a computer for explicit examples,  it is extremely difficult to construct universal Gr\"obner bases for some (infinite) family of ideals.  Theorem~\ref{HWMOY} gives universal Gr\"obner bases of Specht ideals $I_\lambda$. However, since $I_\lambda$ are symmetric, this  case is exceptional. So it must be very interesting, if the Gr\"obner bases of non-symmetric ideals $I_{l, \lambda}$ given in Theorem~\ref{main1} are universal. Corollary~\ref{smallest & largest} is an affirmative evidence. 
\item[(3)] One of the motivations of the paper \cite{LL} of Li and Li is an application to graph theory (see \cite{deL} for further connection to Gr\"obner bases theory). We expect that the present paper gives a new inslight to this direction.  
\end{itemize}

In the present paper, for the convention and notation of the Gr\"{o}bner bases theory, we basically follow \cite[Chapter 1]{HHO}.

\section{A generalization of the case $\#Y_1=\cdots =\#Y_l=1$}
We keep the same notation as Introduction, and fix a positive integer $l$.   
For $\lambda \in [P_{n+l-1}]_{\ge l}:=\{ \lambda \in P_{n+l-1} \mid \lambda_1 \ge l \}$, we consider  a bijective filling of the squares of the Young diagram of  $\lambda$
by the multiset $\{\overbrace{1, \ldots, 1}^{\text{$l$ copies}},2, \ldots, n\}$ such that no two copies of 1 are contained in the same column.  Let $\tab {l, \lambda}$ be the set of such tableaux. 
For example, the tableau \eqref{Tab(l)} above is an element of $\tab{4, \lambda}$ for  $\lambda=(6,3,2)$ (moreover, this is a {\it standard} tableau defined below).  
The Specht polynomial $f_T$ of $T \in \tab{l, \lambda}$ is defined by the same way as in the classical case. For example, 
if $T$ is the one in  \eqref{Tab(l)}, then $f_T=(x_1-x_4)(x_1-x_6)(x_4-x_6)(x_1-x_5)(x_1-x_7)(x_5-x_7)(x_1-x_8)$. 
For $\lambda \in [P_{n+l-1}]_{\ge l}$, consider the ideal 
$$I_{l, \lambda}:=(f_T \mid T \in \tab{l, \lambda})$$
of $S$. Clearly, $\tab{1, \lambda}=\tab{\lambda}$ and $I_{1,\lambda}=I_\lambda$.

For $\lambda=(\lambda_1,\dots,\lambda_p), \mu =(\mu_1,\dots,\mu_q) \in P_m$,
we write $\lambda \unrhd \mu$ if $\lambda$ is equal to or larger than $\mu$ with respect to the {\it dominance order},
that is,
$$\lambda_1+ \cdots+\lambda_i \geq \mu_1+ \cdots + \mu_i \ \ \ \mbox{ for }i =1,2,\dots,\min\{p, q\}.$$
In what follows, we regard $[P_{n+l-1}]_{\ge l}$ as a subposet of $P_{n+l-1}$. 

For $\lambda \in P_m$ and $j$ with $1 \le j \le \lambda_1$, let $\lambda^\perp_j$ be the length of the $j$-th column of the Young diagram of $\lambda$.   
Then $\lambda^\perp=(\lambda^\perp_1, \lambda^\perp_2, \ldots)$ is a partition of $m$ again.  It is a classical result that $\lambda \unrhd  \mu$ if and only if $ \lambda^\perp \unlhd  \mu^\perp$.

\begin{remark}\label{cover}
By \cite[Proposition~2.3]{B}, if $\lambda$ covers $\mu$ (i.e., $\lambda \rhd  \mu$, and there is no other partition between them), then there are two integers $i, i'$ with $i< i'$ such that $\mu_i=\lambda_i-1$, $\mu_{i'}=\lambda_{i'}+1$, and $\mu_k=\lambda_k$ for all $k\ne i, i'$, equivalently, there are two integers $j, j'$ with $j< j'$ such that $\mu^\perp_j=\lambda^\perp_j+1$, $\mu^\perp_{j'}=\lambda^\perp_{j'}-1$, and $\mu^\perp_k=\lambda^\perp_k$ for all $k\ne j, j'$. Clearly, $\mu_j^\perp \ge \mu_{j'}^\perp +2$ in this case. 
Here, we allow the case $i'$ is larger than the length $p$ of $\lambda$, where we set $\lambda_{i'}=0$. Similarly, the case $\mu^\perp_{j'}=0$ might occur. 
\end{remark}

\begin{remark}
By a similar argument to the proof of Lemma~\ref{stab}, to generate $I_{l,\lambda}$, it suffices to use $T \in \tab{l, \lambda}$ such that the left most $l$ squares in the first row are filled by 1.  So, in manner of \eqref{I_Y}, the ideal $I_{l, \lambda}$ can be represented as follows. 
$$I_{l, \lambda}=\left( \, \prod_{i=1}^{\lambda_1} \Delta(X_i) \, \middle | \,   1 \in X_i  \ \text{for} \ 1 \le i \le l,   \   \# X_i=\lambda_i^\perp \ \text{for all} \ i,  \  \bigcup_{i=1}^{\lambda_1} X_i=[n] \, \right )$$
\end{remark}

\noindent{\bf Convention.} Throughout this paper, when we consider the Gr\"obner bases, we use  the lexicographic order with $x_1 < \cdots < x_n$ unless otherwise specified (see Lemma~\ref{product of linear forms} below, which states that only  the order among the variables $x_1, \ldots, x_n$ matters for our Gr\" obner bases), and the initial monomial $\operatorname{in}_<(f)$ of $0 \ne f \in S$ will be simply denoted by $\ini{f}$. 

\medskip

For $T \in \tab{l,\lambda}$, recall that $T(j)$ is the set of the entries of the $j$-th column of $T$. If $\sigma$ is a permutation on $T(j)$, we have $f_{\sigma T} ={\rm sgn}(\sigma)f_T$ for each $j$.  
In this sense, to consider $f_T$, we may assume that $T$ is {\it column standard}, that is, all columns are increasing from top to bottom (in particular, all 1's appear in the 1st row).  

If $T$ is column standard and the number $i$ is in the $d_i$-th row of $T$, we have 
\begin{equation}\label{in(f)}
\ini {f_T} = \prod_{i=1}^n x_i^{d_i-1}
\end{equation}
 (recall our convention on the monomial order). 

If a column standard tableau $T \in \tab{l, \lambda}$ is also row semi-standard (i.e., all rows are non-decreasing from left to right), we say $T$ is {\it standard}. 
Let $\stab{l, \lambda}$ be the set of standard tableaux in $\tab{l, \lambda}$. We simply denote $\stab{1, \lambda}$ by $\stab{\lambda}$. The next result is very classical when $l=1$.   

\begin{lemma}\label{stab} 
For $\lambda \in [P_{n+l-1}]_{\ge l}$, $\{ f_T \mid T\in \stab{l, \lambda} \}$ forms a basis of the vector space $V$ spanned by  $\{ f_T \mid T\in \tab{l, \lambda} \}$. Hence $\{ f_T \mid T\in \stab{l, \lambda} \}$ is a minimal system of generators of $I_{l, \lambda}$. 
\end{lemma}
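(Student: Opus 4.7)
The plan is to prove linear independence via distinct leading monomials and to establish spanning by lifting to the classical Specht setting: replace the $l$ indistinguishable copies of $1$ by $l$ new variables, invoke the classical basis theorem, and then specialize back to $x_1$.

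For linear independence, I would read off $\ini{f_T}$ using equation (2.2): the exponent of $x_i$ equals $d_i-1$, the row of $i$ in $T$. If $T \in \stab{l,\lambda}$, then column-standardness forces the $l$ copies of $1$ to sit at the tops of $l$ distinct columns, and row semi-standardness forces them into cells $(1,1),\ldots,(1,l)$; consequently $\ini{f_T}$ determines the row of every entry. Since a row semi-standard arrangement of a fixed multiset of entries within each row is unique, the row data determines $T$, so distinct standard tableaux produce distinct initial monomials.

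For spanning, I would introduce the enlarged ring $\widetilde{S} := K[y_1,\ldots,y_l,x_2,\ldots,x_n]$ together with the surjection $\pi\colon \widetilde{S} \twoheadrightarrow S$ sending $y_k \mapsto x_1$ and fixing the other variables. Given $T \in \tab{l,\lambda}$, relabel its $l$ copies of $1$ by $y_1,\ldots,y_l$ (say, ordered by the columns they occupy) to obtain a classical tableau $\widetilde{T}$ of shape $\lambda$ with $n+l-1$ distinct entries, for which $\pi(f_{\widetilde{T}}) = f_T$. Conversely, $\pi(f_{\widetilde{T}})$ is either some $f_T$ or vanishes (when $\widetilde{T}$ has two $y$'s in the same column), so $\pi(\widetilde{V}) = V$ where $\widetilde{V}$ is the classical Specht space. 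The classical Specht basis theorem (the well-known $l=1$ case of the present lemma) gives that $\{f_{\widetilde{T}} : \widetilde{T} \in \stab{\lambda}\}$ is a $K$-basis of $\widetilde{V}$, and hence $V$ is spanned by $\{\pi(f_{\widetilde{T}}) : \widetilde{T} \in \stab{\lambda}\}$.

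It remains to identify which of those images actually survive. Under the order $y_1<\cdots<y_l<2<\cdots<n$, the $y$-entries of any classical standard $\widetilde{T}$ fill a Young subdiagram $\nu\subseteq\lambda$ in the upper-left corner. If some column of $\nu$ has length at least $2$, then $f_{\widetilde{T}}$ contains a factor $y_k-y_{k'}$ that dies under $\pi$; so only $\nu=(l)$ survives, placing $y_k$ in cell $(1,k)$ and producing, after collapse, exactly a tableau in $\stab{l,\lambda}$. Combined with the linear independence above, this yields the basis claim. The ``minimal generators'' conclusion then follows since all $f_T$ are homogeneous of the same degree and $V$ is the bottom graded component of $I_{l,\lambda}$. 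The main point to watch, I think, is the bookkeeping in this last step: one must confirm that $\pi$ really carries classical Specht polynomials to ours (which it does, on the nose, with no signs, since the $y$'s sit in the same positions as the $1$'s they replace) and that classical standard $\widetilde{T}$'s with $\nu=(l)$ biject with $\stab{l,\lambda}$ via this collapse.
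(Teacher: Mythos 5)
Your proof is correct, but it takes a genuinely different route from the paper's. The paper proceeds by adapting the Garnir straightening algorithm: it observes that the classical Garnir relations apply verbatim except when the two adjacent columns being straightened both contain a copy of~$1$, and in that exceptional case it strips off the first row (reducing to a classical tableau $\wT$ of shape $\wlam$), takes a classical Garnir relation for $f_{\wT}$, and multiplies through by $\prod_{i=2}^n(x_1-x_i)$ to produce the needed relation among the $f_T$. Your argument instead lifts the whole vector space to the classical setting: introduce $\widetilde S = K[y_1,\dots,y_l,x_2,\dots,x_n]$ with specialization $\pi\colon y_k\mapsto x_1$, observe that $\pi$ maps the classical Specht space $\widetilde V$ of shape $\lambda$ onto $V$ (sending column-standard $\widetilde T$ to $f_T$ or to zero when two $y$'s share a column), apply the classical basis theorem in $\widetilde S$ under the order $y_1<\dots<y_l<2<\dots<n$, and note that the only surviving standard $\widetilde T$'s are precisely those whose $y$'s occupy cells $(1,1),\dots,(1,l)$, which biject with $\stab{l,\lambda}$. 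This is a cleaner "black box" use of the classical theorem, avoiding any re-derivation of straightening relations; the paper's version is more self-contained in the sense that it makes the modified Garnir identities explicit (and indeed reuses that machinery implicitly elsewhere, e.g.\ in Lemma~\ref{inclusion}). Your bookkeeping checks out: $\pi$ does carry $f_{\widetilde T}$ to $f_T$ on the nose for column-standard $\widetilde T$, the $l$ smallest entries of a standard tableau always occupy a Young subdiagram, and the hypothesis $\lambda_1\ge l$ guarantees the surviving shape $(l)$ fits inside $\lambda$. The minimal-generation conclusion from $V$ being the lowest graded piece is also fine and slightly more explicit than the paper's statement.
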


\begin{proof}
In the classical case (i.e., when $l=1$), we can rewrite $f_T$ for $T \in \tab{\lambda}$ as a linear combination of $f_{T_i}$'s for $T_i \in \stab{\lambda}$ repeatedly using the relations given by {\it Garnir elements} (see \cite[\S 2.6]{Sa}). Such a relation concerns the $j$-th and the $(j+1)$-st  columns of $T$.   The classical argument directly works in our case unless both of these columns contain 1.  
So we assume that both columns have 1. Since $f_T=\prod_{j=1}^{\lambda_1} \Delta(T(j))$, we can  concentrate on the $j$-th and $(j+1)$-st columns of $T$, and may assume that $T$ consists of two columns (i.e., $\lambda$ is of the form $(2,\lambda_2, \ldots, \lambda_p) \in P_{n+2-1}=P_{n+1}$) and $l=2$. 
Set $\wlam :=(\lambda_2, \ldots, \lambda_p) \in P_{n-1}$.  
Removing the first row from $T \in \tab{2, \lambda}$, we have $\wT \in \tab{\wlam}$ (the set of the entries of $\wT$ is $\{2, \ldots, n\}$). The converse operation $\tab{\wlam} \ni \wT \longmapsto T \in \tab{2, \lambda}$ also makes sense. 
Clearly, $f_T=(\prod_{i=2}^n(x_1-x_i)) \cdot f_{\wT}$. Multiplying $\prod_{i=2}^n(x_1-x_i)$  to both sides of a Garnir relation $f_\wT =\sum_{i=1}^k \pm f_{\wT_i}$ ($\wT, \wT_i \in \tab{\wlam}$), we have the relation  
$f_T =\sum_{i=1}^k \pm f_{T_i}$ ($T, T_i \in \tab{2, \lambda}$). As in the classical case, $T_i$ need not to be standard, but is closer to standard than  $T$. 
Using these relations, the argument in \cite[\S 2.6]{Sa} is applicable to our case, and we can show that  $\{ f_T \mid T\in \stab{l, \lambda} \}$ spans $V$. 

As we have seen in \eqref{in(f)}, 
$\ini{f_T} \ne \ini{f_{T'}}$ holds for distinct $T,T'\in \stab{l, \lambda}$. So  $\{ f_T \mid T\in \stab{l, \lambda} \}$  is linearly independent. 
\end{proof}

For $\bm a =(a_1, \ldots, a_n) \in K^n$, there are  distinct  $\alpha_1, \ldots, \alpha_p \in K$ with $\{\alpha_1, \ldots, \alpha_p\} =\{a_1, \ldots, a_n\}$ as sets. Now we can define the  partition $\mu =(\mu_1,\dots,\mu_p) \in P_n$ such that $\alpha_i$ appears $\mu_i$ times in  $(a_1, \ldots, a_n)$ for each $i$.  
This partition $\mu$  will be denoted by $\Lambda(\bm a)$.
For example, $\Lambda((1,0,2,1,2,2))=(3,2,1)$. 

For $\bm a \in K^n$, set 
$\bm a_{(l)}:=(\overbrace{a_1, \ldots, a_1}^{\text{$l$ copies}}, a_2, \ldots, a_n) \in K^{n+l-1}$ and $\Lambda_l(\bm a):=\Lambda(\bm a_{(l)}) \in [P_{n+l-1}]_{\ge l}$. 
For example, if $\bm a = (1,0,2,1,2,2)$, then $\bm a_{(3)}=(1,1,1, 0,2,1,2,2)$ and $\Lambda_3 (\bm a) =(4,3,1)$. 
  When $l=1$, the following result is classical. 

\begin{lemma}[{c.f. \cite[Lemma~2.1.]{MOY}}]
\label{f_T(a)=0}
Let $\lambda \in [P_{n+l-1}]_{\ge l}$ and $T \in \tab{l, \lambda}$.  
For $\bm a \in K^n$ with $\Lambda_l(\bm a) \not \not \! \unlhd \lambda$, we have $f_T(\bm a)=0$.
\end{lemma}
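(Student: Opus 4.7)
The strategy is to reduce to the classical case $l=1$, namely \cite[Lemma~2.1]{MOY}, by relabeling the $l$ copies of the entry $1$ in $T$ as $l$ distinct new entries. More precisely, given $T \in \tab{l,\lambda}$, I would construct a tableau $\wT \in \tab{\lambda}$ (where $\lambda \in P_{n+l-1}$ is now viewed in the classical sense, with entries $\{1, 2, \ldots, n+l-1\}$) as follows: choose any ordering of the $l$ cells of $T$ containing the entry $1$, relabel them with $1, 2, \ldots, l$ in that order, and shift every other entry $i \in \{2, \ldots, n\}$ to $i + l - 1$. Because no column of $T$ contains two $1$'s, $\wT$ is a well-defined Young tableau of shape $\lambda$.

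The central identity I would then verify is
\[
f_T(\bm a) \;=\; \pm\, f_{\wT}(\bm a_{(l)}).
\]
Since Specht polynomials factor column by column, this reduces to a check within each column. Each column of $T$ contains at most one copy of $1$, and correspondingly each column of $\wT$ contains at most one entry from $\{1, \ldots, l\}$; the remaining entries match under the shift $i \leftrightarrow i + l - 1$. The substitution defining $\bm a_{(l)}$ sends $y_1, \ldots, y_l$ all to $a_1$ and sends $y_{l+i-1}$ to $a_i$ for $i \ge 2$, so every difference factor appearing in $f_T(\bm a)$ is matched by a corresponding factor in $f_{\wT}(\bm a_{(l)})$ with the same numerical value. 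Signs arise only from the arbitrary choice of ordering of the $l$ copies of $1$ and from column orderings in the Specht polynomials, and are irrelevant for the vanishing question.

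Finally, since $\Lambda(\bm a_{(l)}) = \Lambda_l(\bm a) \not\unlhd \lambda$ by hypothesis, the classical case \cite[Lemma~2.1]{MOY} applied to $\wT \in \tab{\lambda}$ and $\bm a_{(l)} \in K^{n+l-1}$ gives $f_{\wT}(\bm a_{(l)}) = 0$, whence $f_T(\bm a) = 0$. The only step requiring genuine care is the column-by-column factor matching, but as we need only the vanishing conclusion and no specific sign, I expect this to be a bookkeeping check rather than a serious obstacle.
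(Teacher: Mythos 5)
Your proof is correct and takes essentially the same approach as the paper: both reduce to the classical $l=1$ case of \cite[Lemma~2.1]{MOY} by observing that substituting $\bm a$ into $T$ produces the same $K$-valued filling as substituting $\bm a_{(l)}$ into an ordinary tableau of shape $\lambda$ with $n+l-1$ distinct entries. The paper simply works directly with the substituted tableau $T(\bm a)$ and re-runs the MOY argument, whereas you make the reduction explicit by constructing the relabeled tableau $\wT$ and invoking the classical lemma as a black box; the content is the same.
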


\begin{proof}
For $\bm a=(a_1, \ldots, a_n) \in K^n$, replacing $i$ with $a_i$ for each $i$ in $T$, we have a tableau $T(\bm a)$, whose entries are elements in $K$.    It is easy to see that $f(\bm a) \ne 0$ if and only if the entries in each column of $T(\bm a)$ are all distinct. So the assertion follows from the same argument as \cite[Lemma~2.1]{MOY}. 
\end{proof}

\begin{lemma}[{c.f. \cite[Theorem~1.1]{MRV}}]
\label{inclusion}
For $\lambda, \mu \in [P_{n+l-1}]_{\ge l}$ with $\lambda \unrhd \mu$, we have $I_{l, \lambda} \supset I_{l, \mu}$. 
\end{lemma}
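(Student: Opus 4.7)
The plan is to reduce to the case where $\lambda$ covers $\mu$ via transitivity of $\unrhd$, then use Remark~\ref{cover} together with the ordered-set-partition description of $I_{l, \mu}$ given in the unnumbered Remark that immediately follows, and finally invoke a Laplace-type identity to realize the ``box shift'' from one column of $\mu^\perp$ to another.

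Once reduced to a cover, Remark~\ref{cover} supplies indices $1 \le j < j' \le \lambda_1$ and integers $r \ge s+2$ with $\mu_j^\perp = r = \lambda_j^\perp + 1$, $\mu_{j'}^\perp = s = \lambda_{j'}^\perp - 1$, and $\mu_i^\perp = \lambda_i^\perp$ for $i \ne j, j'$. A typical generator of $I_{l, \mu}$ then has the form $\prod_{i=1}^{\mu_1}\Delta(Y_i)$ with $|Y_i| = \mu_i^\perp$, $1 \in Y_i$ for $i \le l$, and $\bigcup_i Y_i = [n]$. Taking $X_i = Y_i$ for all $i \ne j, j'$ handles every unchanged column at once, so the task collapses to rewriting $\Delta(Y_j)\Delta(Y_{j'})$ as a polynomial combination of products $\Delta(X_j)\Delta(X_{j'})$ with $|X_j| = r-1$, $|X_{j'}| = s+1$, and obeying the same $1$-containment pattern as the $Y$'s.

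The main algebraic tool will be the Laplace-type identity: for any disjoint $A, B \subset [n]$ with $|A| = r'$, $|B| = s'$, and $r' \ge s'+1$,
\[
\Delta(A)\Delta(B) \;=\; \sum_{a \in A}\; \pm\, x_a^{r'-1-s'}\, \Delta(A\setminus\{a\})\, \Delta(B \cup \{a\}),
\]
with signs depending only on the position of $a$ in the sorted order of $A$ and on the interleaving of $a$ in $B \cup\{a\}$. One derives this by writing $\Delta(B\cup\{a\}) = \pm\Delta(B)\prod_{b \in B}(x_a-x_b)$, multiplying by $x_a^{r'-1-s'}$ so that the coefficient of $\Delta(A\setminus\{a\})$ in the sum is a polynomial in $x_a$ whose leading term is $x_a^{r'-1}$, and then combining the bialternant vanishing relations $\sum_a \pm x_a^k\Delta(A\setminus\{a\})=0$ for $0 \le k < r'-1$ with the Laplace expansion $\sum_a \pm x_a^{r'-1}\Delta(A\setminus\{a\}) = \Delta(A)$.

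To finish, I split into three cases according to whether $j, j'$ lie in $\{1,\ldots,l\}$. When $j, j' > l$, neither $Y_j$ nor $Y_{j'}$ contains $1$, and the identity applies directly to $(A,B) = (Y_j, Y_{j'})$, yielding $X_j^{(a)} = Y_j\setminus\{a\}$ and $X_{j'}^{(a)} = Y_{j'}\cup\{a\}$. When $j \le l < j'$, I write $Y_j = \{1\} \cup \widetilde Y_j$, apply the identity to $(\widetilde Y_j, Y_{j'})$ with $(r',s')=(r-1,s)$, and multiply through by $\prod_{b \in \widetilde Y_j}(x_1-x_b)$; because the identity forces $\widetilde X_j^{(a)} := \widetilde Y_j\setminus\{a\}$ to lie inside $\widetilde Y_j$, this prefactor reconstitutes $\Delta(\{1\} \cup \widetilde X_j^{(a)})$ and leaves behind a polynomial factor $(x_1-x_a)$ rather than a rational function. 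The case $j, j' \le l$ is analogous, using $\widetilde Y_i := Y_i\setminus\{1\}$ for $i \in \{j,j'\}$ and the universal prefactor $\prod_{b \in \widetilde Y_j \cup \widetilde Y_{j'}}(x_1-x_b)$, which feeds the $1$-factor into both $X_j^{(a)} = \{1\} \cup \widetilde X_j^{(a)}$ and $X_{j'}^{(a)} = \{1\} \cup \widetilde X_{j'}^{(a)}$ simultaneously. In every case, multiplying by $\prod_{i \ne j,j'}\Delta(Y_i)$ exhibits the original generator of $I_{l,\mu}$ as a polynomial combination of generators of $I_{l,\lambda}$. The main obstacle is the bookkeeping in the cases where one or both of $j, j'$ is $\le l$: they go through cleanly only because the identity always places $\widetilde X_j^{(a)}$ inside $\widetilde Y_j$, keeping the ``$1$-prefactor'' ratios polynomial.
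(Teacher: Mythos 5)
Your proposal is correct and follows the same high-level route as the paper (reduce to a cover, invoke Remark~\ref{cover} to identify the two columns $j < j'$, and express the ``box shift'' from column $j$ to column $j'$ by a polynomial identity for products of two difference products), but the key algebraic identity you use is genuinely different. The paper imports Li-Li's Proposition~3.1, which reads
\[
\Delta(A)\cdot\Delta(B)=\sum_{k-k'\le i\le k}(-1)^{i-k+k'}\Delta(A\setminus\{a_i\})\,\Delta(B\cup\{a_i\})\prod_{1\le i'<k-k'}(x_{a_{i'}}-x_{a_i}),
\]
a product-weighted expansion whose summation range starts at $i=k-k'\ge 2$. This is precisely what makes the paper's proof short: since the sum never reaches $i=1$, the smallest element $a_1$ of $A$ is never moved, so if $1\in A$ it stays put automatically, and no case analysis on the positions of the columns containing $1$ is needed. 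Your Laplace-type identity $\Delta(A)\Delta(B)=\sum_{a\in A}\pm x_a^{r'-1-s'}\Delta(A\setminus\{a\})\Delta(B\cup\{a\})$ (correct, and well derived from the bialternant vanishing relations) instead sums over all of $A$, so when $1\in A$ you must prevent the $a=1$ term from producing a column pattern that violates the $1$-constraint or a non-disjoint pair $A,B$; your three-way split on whether $j,j'\le l$, factoring out the $(x_1-x_b)$ prefactor, is a sound way to do this, and your observation that $\widetilde X_j^{(a)}\cup\widetilde X_{j'}^{(a)}=\widetilde Y_j\cup\widetilde Y_{j'}$ in the $j,j'\le l$ case is exactly the point that keeps the prefactor universal. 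The trade-off is that your argument is longer and more bookkeeping-heavy, but it is self-contained (it re-derives the needed identity from Vandermonde expansions instead of quoting Li-Li), whereas the paper's version piggybacks on a citation that happens to have the ``skip $a_1$'' feature built in. Both are valid proofs.
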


\begin{proof}
The proof is essentially same as the classical case, while we have to care about one point. 
First, we will recall a basic property of difference products. 
For subsets $A=\{a_1, a_2, \ldots, a_k\}$ and  $B=\{b_1, b_2, \ldots, b_{k'} \}$ of $[n]$ with $k \ge k'+2$, 
we have  
\begin{equation}\label{diff product}
\Delta(A)\cdot \Delta(B)=\sum_{k-k' \le i \le k} (-1)^{i-k+k'}\left[\Delta(A\setminus \{a_i\}) \cdot \Delta(B \cup \{a_i\}) \cdot \prod_{1 \le i' < k-k'}(x_{a_{i'}}-x_{a_i}) \right]
\end{equation}
by \cite[Proposition~3.1]{LL}, where we regard $a_i$ as the last element of $B \cup \{a_i\}$.  

Let us start with the main body of the proof. To prove the assertion, we may assume that $\lambda$ covers $\mu$. By Remark~\ref{cover}, there are $j,j'$ with $j<j'$ such that $\mu^\perp_j=\lambda^\perp_j+1$, $\mu^\perp_{j'}=\lambda^\perp_{j'}-1$, and $\mu^\perp_i=\lambda^\perp_i$ for all $i \ne j, j'$. Take $T \in \tab{l, \mu}$, and let $A=\{a_1, \ldots, a_k\}$ (resp. $B=\{b_1, \ldots, b_{k'} \}$) be the set of the contents of the $j$-th (resp. $j'$-th) column of $T$. For $i$ with $k-k' \le i \le k$, consider the tableau $T_i$ whose $j$-th (resp. $j'$-th) column consists of the elements of $A\setminus \{a_i\}$ (resp. $B \cup \{a_i\}$) and the other columns are same as those of $T$. Since $a_i \ge 2$ for $i \ge 2$, we have $T_i \in \tab{l, \lambda}$.  By \eqref{diff product}, we have 
\begin{equation}\label{inclusion expansion}
f_T=\sum_{k-k' \le i \le k} (-1)^{i-k+k'}\left[f_{T_i} \cdot \prod_{1 \le i' < k'-k}(x_{a_{i'}}-x_{a_i}) \right] \in I_{l, \lambda},
\end{equation}
and it means that $I_{l, \lambda} \supset I_{l, \mu}$. 
\end{proof}

We say that $\Fc \subset [P_{n+l-1}]_{\ge l}$ is a {\it lower (resp. upper) filter} if $\lambda \in \Fc$, $\mu \in  [P_{n+l-1}]_{\ge l}$ and $\mu \unlhd \lambda$ (resp. $\mu \unrhd \lambda$) imply $\mu \in \Fc$. 
For a {\it lower} filter $\Fc \subset [P_{n+l-1}]_{\ge l}$, set 
$$G_{l, \Fc} := \{f_T \mid T \in \stab{l, \lambda} \, \text{for} \, \lambda \in \Fc \},$$
and let $I_{l, \Fc} \subset S$ be the ideal generated by $G_{l, \Fc}$, equivalently, 
$$I_{l,\Fc}:=\sum_{\lambda \in \Fc} I_{l, \lambda}. $$
In particular, for $\lambda \in [P_{n+l-1}]_{\ge l}$, $\Fc_\lambda:=\{\mu \in [P_{n+l-1}]_{\ge l} \mid \mu  \unlhd \lambda \}$ is a lower filter, and we have $I_{l, \lambda}=I_{l, \Fc_\lambda}$ by Lemma~\ref{inclusion}.   
For convenience, set $G_{l, \emptyset}=\emptyset$ and $I_{l, \emptyset}=(0)$. 

For an {\it upper} filter  $\emptyset \ne \Fc \subset [P_{n+l-1}]_{\ge l}$, we consider the ideal 
 $$J_{l, \Fc} :=( f \in S \mid f(\bm a)=0 \, \text{for $\forall \bm a \in K^n$ with $\Lambda_l(\bm a) \in \Fc$} ). $$
Clearly, $J_{l, \Fc}$ is a radical ideal.

\begin{theorem}\label{main1}
Let $\Fc \subsetneq [P_{n+l-1}]_{\ge l}$ be a lower filter, and $\Fc^\cmpl:=  [P_{n+l-1}]_{\ge l} \setminus \Fc$ its complement (note that $\Fc^\cmpl$ is an upper filter). Then $G_{l, \Fc}$ is a Gr\"obner basis of $J_{l, \Fc^\cmpl}$. 
\end{theorem}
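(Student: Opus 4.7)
The easy inclusion $G_{l,\Fc} \subseteq J_{l,\Fc^\cmpl}$ is immediate from Lemma~\ref{f_T(a)=0}: if $T \in \stab{l,\lambda}$ with $\lambda \in \Fc$ and $\Lambda_l(\bm a) \in \Fc^\cmpl$, the lower-filter property of $\Fc$ forces $\Lambda_l(\bm a) \not\unlhd \lambda$, whence $f_T(\bm a) = 0$. Consequently
$$\ini{G_{l,\Fc}} \;\subseteq\; \ini{I_{l,\Fc}} \;\subseteq\; \ini{J_{l,\Fc^\cmpl}},$$
and the Gr\"obner basis claim reduces to equality of the two outer monomial ideals, equivalently to equality of the Hilbert functions of $S/\ini{G_{l,\Fc}}$ and $S/J_{l,\Fc^\cmpl}$. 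A successful proof will simultaneously yield $I_{l,\Fc} = J_{l,\Fc^\cmpl}$ and the radicalness of $I_{l,\Fc}$ for free, since $J_{l,\Fc^\cmpl}$ is visibly radical.

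Both Hilbert functions admit explicit descriptions. On the monomial side, \eqref{in(f)} gives $\ini{f_T} = \prod_{i=2}^n x_i^{d_i-1}$ for $T \in \stab{l,\lambda}$ (note $d_1 = 1$, so $x_1$ is absent from every initial monomial). On the geometric side, $V(J_{l,\Fc^\cmpl}) = \{\bm a \in K^n \mid \Lambda_l(\bm a) \in \Fc^\cmpl\}$ is a finite union of coordinate-equality subspaces indexed by set partitions $\pi$ of $[n]$ whose ``shape'' (with the block containing $1$ enlarged by $l-1$) lies in $\Fc^\cmpl$; its Hilbert series is in principle computable by M\"obius inversion on the intersection lattice. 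I plan to match the two Hilbert functions by induction on $|\Fc|$: the base case $\Fc = \emptyset$ is immediate (both ideals are $(0)$, as $\Lambda_l(\bm a) \in [P_{n+l-1}]_{\ge l}$ is tautological and $K$ is infinite), and for the inductive step one removes a maximal element $\mu$ from $\Fc$, forming the smaller lower filter $\Fc' := \Fc \setminus \{\mu\}$, and exploits the decomposition $V((\Fc')^\cmpl) = V(\Fc^\cmpl) \cup \overline{\{\bm a \mid \Lambda_l(\bm a) = \mu\}}$ to reduce to a single-new-stratum comparison.

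The main obstacle will be this inductive step. One must verify that the freshly-admitted generators $\{f_T \mid T \in \stab{l,\mu}\}$ contribute exactly the right increment to $S/\ini{G_{l,\Fc}}$ (over the analogous quotient for $\Fc'$) to match the change in Hilbert function on the geometric side. Concretely, one picks a generic $\bm a$ with $\Lambda_l(\bm a) = \mu$ and shows that the classes of the $f_T$ ($T \in \stab{l,\mu}$) in $J_{l,(\Fc')^\cmpl}/J_{l,\Fc^\cmpl}$ are linearly independent (an evaluation argument combined with Lemma~\ref{stab}) and also span (the hard half). For $l=1$ this is exactly the content of Murai--Ohsugi--Yanagawa's proof of Theorem~\ref{HWMOY}. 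For general $l$, the distinguished role of $x_1$---all copies of $1$ occupying the first row of any standard tableau, and $x_1$ being absent from every $\ini{f_T}$---should permit a reduction, via the translation-invariant change of variables $y_i = x_i - x_1$ ($i \ge 2$), to a shifted classical situation where the argument of \cite{MOY} applies essentially verbatim.
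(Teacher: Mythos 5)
Your proposal takes a genuinely different route from the paper's, and it contains a real gap at exactly the step you flag as the ``main obstacle.''

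The paper's own proof inducts on $n$, the number of variables, not on $|\Fc|$. One writes $f = g_d x_n^d + \cdots + g_1 x_n + g_0 \in J_{l,\Fc^\cmpl}$ and proves a key lemma (Lemma~\ref{keylemma}) asserting that each coefficient $g_i$ lies in $J_{l,(\Fc^\cmpl)_{d+1}}$. That lemma is established by an evaluation argument: for $\bm a$ with $\Lambda_l(\bm a) \in \Fc_{d+1}$, the single-variable polynomial $f(\bm a, x_n)$ vanishes at $d+1$ points but has degree at most $d$, hence is identically zero. Then by the inductive hypothesis there is $T \in \stab{l,\mu}$ with $\mu \in \Fc_{d+1}$ and $\ini{f_T} \mid \ini{g_d}$; putting $\lambda := \mu + \add{d+1}$ and inserting the entry $n$ in the new square yields $T' \in \stab{l,\lambda}$ with $\ini{f_{T'}} = x_n^q \cdot \ini{f_T}$ for some $q \le d$, which divides $\ini{f} = x_n^d \cdot \ini{g_d}$. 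This division argument entirely avoids the intersection-lattice bookkeeping your Hilbert-function approach would demand: the closures $\overline{H_{l,\mu}}$ are not coordinate subspaces, and distinct strata intersect along closures of strictly smaller ones, so the ``single-new-stratum'' increment on the geometric side is not at all easy to match against the increment in $\ini{G_{l,\Fc}}$ coming from the monomials $\ini{f_T}$, $T \in \stab{l,\mu}$.

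The more serious problem is your proposed tool for filling the gap. The change of variables $y_i = x_i - x_1$ ($i \ge 2$) correctly reflects the translation invariance of the generators (and you correctly note that $x_1$ is absent from every $\ini{f_T}$), but it does \emph{not} produce a classical Specht ideal in $K[y_2,\dots,y_n]$. Under the substitution, any column $T(j)$ of $T$ containing $1$, say $T(j) = \{1, t_1, \dots, t_{k-1}\}$, sends $\Delta(T(j))$ to $\pm\, y_{t_1}\cdots y_{t_{k-1}} \cdot \Delta(\{t_1,\dots,t_{k-1}\})$; the extra monomial prefactor $\prod y_{t_i}$ makes these generators something other than Specht polynomials in the $y$'s. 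So the argument of \cite{MOY} does not apply ``essentially verbatim,'' even for $l=1$. (For $l=1$ the ideal \emph{is} a classical Specht ideal, but only in the original $x$-variables; and in any case \cite{MOY} itself inducts on $n$ via a coefficient lemma, not on the size of the filter.) In short, the easy inclusion and the reduction to a Hilbert-function comparison are sound, but neither the inductive step nor your suggested mechanism for completing it is correct as stated; the paper's key lemma is the missing ingredient.
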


The following corollary is immediate from the theorem. 

\begin{corollary}\label{Cor to main1}
With the above situation, we have $I_{l, \Fc} =J_{l, \Fc^\cmpl}$, and $I_{l, \Fc}$ is a radical ideal. 
In particular, $I_{l, \lambda}$ is a radical ideal with 
$$I_{l, \lambda} = ( f \in S \mid f(\bm a)=0 \, \text{for $\forall \bm a \in K^n$ with $\Lambda_l(\bm a) \! \! \not \! \unlhd \lambda$} ),$$
for which $\{ f_T \mid T \in  \stab{l,\mu}, \mu \unlhd \lambda  \}$ forms a Gr\"obner basis.  
\end{corollary}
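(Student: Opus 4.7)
The plan is to adapt the Murai--Ohsugi--Yanagawa argument from \cite{MOY} (which handles the classical case $l=1$) to the present setting. The target reduces to proving
\begin{equation*}
(\ini G_{l,\Fc}) \ \subseteq \ \ini(J_{l,\Fc^\cmpl}) \ \subseteq \ (\ini G_{l,\Fc}),
\end{equation*}
which together with the containment $(G_{l,\Fc}) \subseteq J_{l,\Fc^\cmpl}$ yields that $G_{l,\Fc}$ is a Gr\"obner basis of $J_{l,\Fc^\cmpl}$. The first inclusion is a direct application of Lemma~\ref{f_T(a)=0}: for $T \in \stab{l,\mu}$ with $\mu \in \Fc$ and $\bm a \in K^n$ with $\Lambda_l(\bm a) \in \Fc^\cmpl$, the lower-filter property of $\Fc$ gives $\Lambda_l(\bm a) \not\unlhd \mu$, so $f_T(\bm a) = 0$. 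Hence $G_{l,\Fc} \subseteq J_{l,\Fc^\cmpl}$, and taking initial ideals gives the first inclusion.

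For the second (main) inclusion, write $M := \mathrm{Mon}(S) \setminus (\ini G_{l,\Fc})$; it is enough to show that $M$ is linearly independent modulo $J_{l,\Fc^\cmpl}$. As $J_{l,\Fc^\cmpl}$ is a vanishing ideal, I would attach to each $m \in M$ an evaluation point $\bm a^{(m)} \in K^n$ with $\Lambda_l(\bm a^{(m)}) \in \Fc^\cmpl$, so chosen that the matrix $\bigl( m(\bm a^{(m')}) \bigr)_{m, m' \in M}$ becomes triangular with nonzero diagonal entries under an ordering of $M$ refining dominance. The construction uses~\eqref{in(f)}: the exponent vector of $m$ defines a row function $d_i = e_i + 1$ on $\{2,\ldots,n\}$, and sorting $\bigl( l + \#\{i \ge 2 : d_i = 1\},\ \#\{i \ge 2 : d_i = 2\},\ \ldots \bigr)$ produces a partition $\mu(m) \in [P_{n+l-1}]_{\ge l}$. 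Setting $a_i = t_{d_i}$ for generic values $t_r \in K$ (with $a_1 = t_1$) then yields $\bm a^{(m)}$ with $\Lambda_l(\bm a^{(m)}) = \mu(m)$.

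Two items complete the argument. First is the combinatorial assertion that $m \in M$ implies $\mu(m) \in \Fc^\cmpl$ (equivalently, $\mu(m) \in \Fc$ implies $m \in (\ini G_{l,\Fc})$): given $m$ with $\mu(m) \in \Fc$, one must build a standard tableau $T \in \stab{l, \mu'}$ for some $\mu' \in \Fc$ such that $\ini{f_T}$ divides $m$. I would do this by a greedy column-filling procedure, starting from the shape $\mu(m)$ and placing each entry $i$ in the lowest-indexed row $\le d_i$ into which insertion keeps the columns strictly increasing, possibly falling back to a smaller shape $\mu' \unlhd \mu(m)$ (still in $\Fc$, since $\Fc$ is a lower filter). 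Second is the triangularity of the evaluation matrix: (i) $m(\bm a^{(m')}) = 0$ whenever $\mu(m) \not\unrhd \mu(m')$, which is a Lemma~\ref{f_T(a)=0}-type vanishing applied via the relationship between $m$ and the leading terms of Specht polynomials; and (ii) the diagonal entries $m(\bm a^{(m)})$ are nonzero for generic $t_r$'s, automatic since $K$ is infinite. The main obstacle will be the combinatorial assertion: the classical $l=1$ argument of \cite{MOY} has to be adapted to account for the $l$ repeated copies of the entry $1$, all sitting in row~$1$ of our tableaux, which complicate both the row-function/shape correspondence and the greedy construction of $T$.
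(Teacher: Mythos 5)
The paper treats this Corollary as an immediate consequence of Theorem~\ref{main1}: once $G_{l,\Fc}$ is known to be a Gr\"obner basis of $J_{l,\Fc^\cmpl}$, it in particular generates $J_{l,\Fc^\cmpl}$, so $I_{l,\Fc}=(G_{l,\Fc})=J_{l,\Fc^\cmpl}$; $J_{l,\Fc^\cmpl}$ is by construction a vanishing ideal, hence radical; and the ``in particular'' assertion follows by taking $\Fc=\Fc_\lambda=\{\mu\in[P_{n+l-1}]_{\ge l}\mid\mu\unlhd\lambda\}$ together with the equality $I_{l,\lambda}=I_{l,\Fc_\lambda}$ from Lemma~\ref{inclusion}. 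You instead bypass Theorem~\ref{main1} and attempt to re-establish the Gr\"obner-basis property directly by an evaluation/interpolation argument, which is a genuinely different route from the paper's inductive keylemma strategy (Lemma~\ref{keylemma}, following~\cite{MOY}). Your first step (showing $G_{l,\Fc}\subseteq J_{l,\Fc^\cmpl}$ via Lemma~\ref{f_T(a)=0}) is correct and coincides with the opening of the paper's proof of Theorem~\ref{main1}.

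The second step, however, has a genuine gap at its crux. Claim~(i) asserts that $m(\bm a^{(m')})=0$ whenever $\mu(m)\not\unrhd\mu(m')$, citing a ``Lemma~\ref{f_T(a)=0}-type vanishing.'' But Lemma~\ref{f_T(a)=0} is a vanishing statement about Specht polynomials, which are products of differences $(x_i-x_j)$ and vanish precisely when a column of $T(\bm a)$ has a repeated entry. A monomial $m=\prod x_i^{e_i}$ evaluated at your point $\bm a^{(m')}$ gives $\prod_i t_{d'_i}^{e_i}$, which is \emph{nonzero} for any choice of distinct nonzero generic $t_r$. Nothing in the construction forces any entry of $\bigl(m(\bm a^{(m')})\bigr)_{m,m'\in M}$ to vanish, so the asserted triangular structure simply is not there; you would be left having to prove the invertibility of a dense generalized-Vandermonde-type matrix, which is a substantial unaddressed problem (and the map $m\mapsto\mu(m)$ has huge fibers, so ``ordering by dominance'' leaves large blocks of ties whose invertibility is the real content). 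You also flag, correctly, that the purely combinatorial assertion ($\mu(m)\in\Fc$ implies $m\in(\ini G_{l,\Fc})$) is nontrivial — it would need a careful RSK-style construction handling the $l$ copies of~$1$ — but the triangularity claim is the more fundamental obstruction. The paper's route through Lemma~\ref{keylemma} (expand $f$ in powers of $x_n$, show the coefficients lie in $J_{l,\Fc_{d+1}}$, and induct on $n$) avoids both difficulties entirely.
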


The strategy of the proof of Theorem~\ref{main1} is essentially same as that of \cite[Theorem~1.1]{MOY}, but we repeat it here for the reader's convenience.  
For a partition $\lambda=(\lambda_1,\ldots,\lambda_p) \in P_m$ and a positive integer $i$,
we write $\lambda+\add i$ for the partition of $m+1$ obtained by rearranging the sequence $(\lambda_1,\dots,\lambda_i+1,\dots,\lambda_p)$, 
where we set $\lambda+\add i=(\lambda_1,\dots,\lambda_p,1)$ when $i>p$. 
For example $(4,2,2,1)+\add 2=(4,2,2,1)+\add 3=(4,3,2,1)$, and $(4,2,2,1)+\add i=(4,2,2,1,1)$ for all $i \ge 5$.
Since $\lambda  \unlhd \mu$ implies $\lambda + \add i  \unlhd \mu + \add i$ for all $i$,  if  $\Fc \subset P_m$ is an upper (resp. lower) filter, then so is 
$$
\Fc_i := \{ \mu \in P_{m-1}  \mid  \mu + \add i \in \Fc\}.
$$


\begin{example}
Even if  a lower filter $\Fc$ has a unique maximal element, $\Fc_i$ does not in general.  For example, if $\Fc :=\{ \lambda \in [P_7]_{\ge l} \mid \lambda \unlhd (3,2,2) \}$ for $l=1,2$, then $\Fc_2$ has two  maximal elements $(3,1,1,1)$ and $(2,2,2)$. 
\end{example}

\begin{lemma}[{c.f. \cite[Lemma~3.3]{MOY}}]
\label{keylemma}
Let $\emptyset \ne \Fc \subset [P_{n+l-1}]_{\ge l}$ be an upper filter, and let $f$ be a polynomial in $J_{l, \Fc}$
of the form
$$
f= g_d x_n^d  + \cdots +  g_1 x_n +g_0,
$$
where $g_0,\dots,g_d \in K[x_1,\dots,x_{n-1}]$ and $g_d\ne 0$.
Then $g_0,\dots,g_d$ belong to $J_{l, \Fc_{d+1}}$.
\end{lemma}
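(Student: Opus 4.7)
The plan is to mimic the Vandermonde/interpolation argument of \cite[Lemma~3.3]{MOY}. Fix any $\bm b = (b_1, \ldots, b_{n-1}) \in K^{n-1}$ with $\mu := \Lambda_l(\bm b) \in \Fc_{d+1}$, that is, $\mu + \add{d+1} \in \Fc$, and consider
$$h(x) := f(b_1, \ldots, b_{n-1}, x) = \sum_{j=0}^d g_j(\bm b)\,x^j \in K[x].$$
Since $\deg h \le d$, it will suffice to exhibit $d+1$ pairwise distinct $c_1, \ldots, c_{d+1} \in K$ with $h(c_i) = 0$, for then $h$ is the zero polynomial and every coefficient $g_j(\bm b)$ vanishes; since $\bm b$ was an arbitrary point of the vanishing locus defining $J_{l,\Fc_{d+1}}$, this forces $g_j \in J_{l,\Fc_{d+1}}$.

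Let $\alpha_1, \ldots, \alpha_p$ be the distinct values of $\bm b_{(l)}$, ordered so that $\alpha_i$ has multiplicity $\mu_i$. The key combinatorial observation is that appending a scalar $c$ to $\bm b$ adjoins a single entry $c$ to the multiset $\bm b_{(l)}$, so
$$
\Lambda_l(b_1, \ldots, b_{n-1}, c) =
\begin{cases}
\mu + \add{k} & \text{if } c = \alpha_k, \\
\mu + \add{p+1} & \text{if } c \notin \{\alpha_1, \ldots, \alpha_p\}.
\end{cases}
$$
Because adding a box to an earlier row of a partition dominates adding it to a later row, $\mu + \add{k} \unrhd \mu + \add{k'}$ whenever $k \le k'$. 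The upper-filter property of $\Fc$, combined with $\mu + \add{d+1} \in \Fc$, thus forces $\mu + \add{k} \in \Fc$ for every $1 \le k \le d+1$.

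Now construct $c_1, \ldots, c_{d+1}$ as follows. If $d+1 \le p$, take $c_i := \alpha_i$ for $1 \le i \le d+1$. If $d+1 > p$, take $c_1, \ldots, c_p$ to be $\alpha_1, \ldots, \alpha_p$ and pick the remaining $d+1-p$ values as distinct elements of $K \setminus \{\alpha_1, \ldots, \alpha_p\}$, which exist since $K$ is infinite. In either case $\Lambda_l(b_1, \ldots, b_{n-1}, c_i) \in \Fc$, and since $f \in J_{l, \Fc}$ this yields $h(c_i) = f(b_1, \ldots, b_{n-1}, c_i) = 0$ for each $i$, finishing the argument.

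The only content beyond the classical $l=1$ case of \cite{MOY} is the verification that, even with its $l$ forced copies of $b_1$, the partition $\Lambda_l$ still changes by a single-box addition upon appending one coordinate; this is immediate from the definition of $\Lambda_l$, so I expect no genuine obstacle beyond this bookkeeping step.
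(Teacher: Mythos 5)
Your proof is correct and follows essentially the same interpolation argument as the paper: fix a point $\bm b$ with $\Lambda_l(\bm b) \in \Fc_{d+1}$, observe that appending one coordinate to $\bm b$ adds a single box to $\Lambda_l(\bm b)$, use the upper-filter property together with $\mu+\add{k}\unrhd\mu+\add{d+1}$ for $k\le d+1$ to produce $d+1$ distinct roots of the degree-$\le d$ polynomial $f(\bm b,x_n)$, and conclude it is identically zero. The paper's only cosmetic difference is that in the case $p<d+1$ it uses infinitely many fresh scalars rather than padding with the finitely many $\alpha_i$'s first; the substance is identical.
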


\begin{proof}
Let $\lambda=(\lambda_1,\dots,\lambda_p) \in \Fc_{d+1}$, and take $\bm a=(a_1, \ldots, a_{n-1}) \in K^{n-1}$ with $\Lambda_l(\bm a) =\lambda$.  Then there are  distinct elements $\alpha_1, \ldots, \alpha_p \in K$ such that $\alpha_i$ appears $\lambda_i$ times in $\bm a_{(l)}$ for $i=1, \ldots,p$. Since $\Fc$ is an upper filter, we have $\lambda + \add i \in \Fc$ for $i=1,2,\ldots,d+1$. We will consider two cases as follows (in the sequel, for $\alpha \in K$, $(\bm a, \alpha)$ means the point in $K^n$ whose coordinate is $(a_1, \ldots, a_{n-1}, \alpha)$): (i) If $p<d+1$, then $\lambda + \add{d+1} =(\lambda_1,\ldots,\lambda_p,1)$. Thus, for any $\alpha \in K \setminus \left\{\alpha_1, \alpha_2, \ldots, \alpha_p \right\}$, we have $\Lambda_l(\bm a,\alpha) =\lambda + \add{d+1}\in \Fc$, and hence $f(\bm a,\alpha)=0$.
(ii) If $p \geq d+1$, then we have $\Lambda_l(\bm  a, \alpha_i)=\lambda + \add{i} \in \Fc$ for any $i=1,\ldots,d+1$ (note that $\lambda + \add{i} \unrhd \lambda + \add{d+1} \in \Fc$ for these $i$), and hence $f(\bm a,\alpha_i)=0$.

In both cases, it follows that the polynomial $f(\bm a, x_n)=\sum^{d}_{i=0}g_i(\bm a)x^i_{n} \in K[x_n]$ has at least $d+1$ zeros. Since the degree of $f(\bm a, x_n)$ is $d$, $f(\bm a, x_n)$ is the zero polynomial in $K[x_n]$. Thus, $g_i(\bm a)=0$ for $i=0,1,\ldots,d$. Hence, $g_0,\dots,g_d \in J_{l, \Fc_{d+1}}$.
\end{proof}

\noindent{\it The proof of Theorem~\ref{main1}.} First, we show that $G_{l, \Fc} \subset J_{l, \Fc^\cmpl}$. Take $T \in \stab{l, \lambda}$ for  $\lambda \in \Fc$, and $\bm a \in K^n$ with $\Lambda_l(\bm a) \in \Fc^\cmpl$ (i.e., $\Lambda_l(\bm a) \not \in \Fc$). Since $\Fc$ is a lower filter, we have $\Lambda_l(\bm a) \not \!\! \unlhd \, \lambda$, and hence $f_T(\bm a)=0$ by Lemma~\ref{f_T(a)=0}.  
So $f_T \in  J_{l, \Fc^\cmpl}$. 

For $\mu \in [P_{n+l-2}]_{\geq l}$, it is easy to see that   
$$\mu \not \in  (\Fc^\cmpl)_{i} \Longleftrightarrow \mu+\add i \not \in \Fc^\cmpl
 \Longleftrightarrow \mu+\add i \in \Fc  \Longleftrightarrow \mu \in \Fc_i,$$
so we have $[P_{n+l-2}]_{\geq l} \setminus  (\Fc^\cmpl)_{i} =\Fc_i$. 

To prove the theorem, it suffices to show that the initial monomial $\ini{f}$ for all $0 \ne f \in J_{l, \Fc^\cmpl}$ can be divided by $\ini{f_T}$ for some $f_T \in G_{l, \Fc}$. We will prove this by induction on $n$. The case $n=1$ is trivial. For $n\geq2$, let $f= g_d x_n^d  + \cdots +  g_1 x_n +g_0 \in J_{l, \Fc^\cmpl}$, where $g_i \in K[x_1,\ldots,x_{n-1}]$ and $g_d\neq0$. By Lemma \ref{keylemma}, one has $g_d \in J_{l, (\Fc^\cmpl)_{d+1}}$. By the induction hypothesis, we have $G_{l, \Fc_{d+1}} (=G_{l,[P_{n+l-2}]_{\geq l} \setminus (\Fc^\cmpl)_{d+1}})$ is a Gr\"obner basis of $J_{l,(\Fc^\cmpl)_{d+1}}$. Then there is $T \in \stab{l, \mu}$ for  $\mu \in \Fc_{d+1}$ such that $\ini{f_T}$ divides $\ini{g_d}$. 
Set $\lambda :=\mu + \add{d+1} \in \Fc$. Let us consider the tableau $T' \in \stab{l, \lambda}$ such that the image of each $i = 1,2,\dots,n-1$ is same for $T$ and $T'$. So $n$ is in the  newly added square. Since $\lambda=\mu + \add{d+1}$, $n$ is in the $(q+1)$-st row of $T'$ for some $q \le d$. 
Since we have $\ini{f}=\ini{g_d x_n^d}=x_n^d \cdot \ini{g_d}$ and $\ini{f_{T'}}=x_n^q \cdot \ini{f_T}$ by \eqref{in(f)}, $\ini{f_{T'}}$ divides $\ini{f}$. Hence, the proof is completed. 
\qed 

\medskip

For  $\lambda=(\lambda_1, \ldots, \lambda_p) \in [P_{n+l-1}]_{\ge l}$, set $H_{l, \lambda}:=\{ \bm a \in K^n \mid \Lambda_l(\bm a) =\lambda\}$. Then we have the decomposition $K^n=\bigsqcup_{\lambda \in [P_{n+l-1}]_{\ge l}} H_{l, \lambda}$, and the dimension of $H_{l, \lambda}$ equals the length $p$ of $\lambda$. For an upper filter $\Fc \subset [P_{n+l-1}]_{\ge l}$,  $S/J_{l, \Fc} \, (=S/I_{l, \Fc^\cmpl} )$ is the coordinate ring of $\bigsqcup_{\lambda \in \Fc} H_{l, \lambda}$. 

\begin{proposition}
The codimension of the ideal $I_{l, \lambda}$ is $\lambda_1-l+1$. 
\end{proposition}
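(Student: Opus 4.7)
The plan is to pass from the algebraic codimension to the geometric dimension of the zero locus and reduce everything to a combinatorial bound on the lengths of partitions. By Corollary~\ref{Cor to main1}, $I_{l,\lambda}$ is the radical ideal $J_{l,\Fc_\lambda^{\cmpl}}$, where $\Fc_\lambda^{\cmpl}=\{\nu \in [P_{n+l-1}]_{\ge l} \mid \nu \not\unlhd \lambda\}$ is an upper filter. As explained in the paragraph preceding the proposition, the vanishing locus $V(I_{l,\lambda})$ is the disjoint union $\bigsqcup_{\nu \in \Fc_\lambda^{\cmpl}} H_{l,\nu}$, which is automatically Zariski closed because $\overline{H_{l,\mu}}=\bigsqcup_{\nu \unrhd \mu}H_{l,\nu}$ stays inside the upper filter whenever $\mu$ does. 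Since $H_{l,\nu}$ is Zariski dense in a linear subspace of $K^n$ of dimension equal to the length $\ell(\nu)$ of $\nu$, the problem reduces to showing
\[
\max\!\bigl\{\ell(\nu) \,\bigm|\, \nu \in [P_{n+l-1}]_{\ge l},\ \nu \not\unlhd \lambda \bigr\} \;=\; n+l-1-\lambda_1,
\]
from which $\operatorname{codim} I_{l,\lambda}=n-(n+l-1-\lambda_1)=\lambda_1-l+1$.

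The lower bound ``$\ge$'' is delivered by the explicit candidate $\mu := (\lambda_1+1,\underbrace{1,\ldots,1}_{n+l-2-\lambda_1})$: it lies in $[P_{n+l-1}]_{\ge l}$ because $\mu_1=\lambda_1+1 > l$, the inequality $\mu_1 > \lambda_1$ already forces $\mu \not\unlhd \lambda$, and $\ell(\mu)=n+l-1-\lambda_1$.

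For the matching upper bound, I take an arbitrary $\nu=(\nu_1,\ldots,\nu_p) \in [P_{n+l-1}]_{\ge l}$ with $p \ge n+l-\lambda_1$ and deduce $\nu \unlhd \lambda$. The elementary partition inequality $\lambda_1+\ell(\lambda)\le |\lambda|+1=n+l$ (from $\lambda_2+\cdots+\lambda_{\ell(\lambda)}\ge \ell(\lambda)-1$) gives $\ell(\lambda)\le n+l-\lambda_1\le p$, so for every $k \le \ell(\lambda)$ we have $k \le p$, and the $p-k$ trailing parts of $\nu$ yield
\[
\sum_{i=1}^k \nu_i \le (n+l-1)-(p-k) \le \lambda_1-1+k \le \lambda_1+(\lambda_2+\cdots+\lambda_k) = \sum_{i=1}^k \lambda_i;
\]
for $k > \ell(\lambda)$ the dominance inequality is automatic since the right-hand side is $n+l-1=|\nu|$. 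Nothing in the plan is technically hard; the one step I expect to be easy to overlook is invoking $\lambda_1 + \ell(\lambda) \le |\lambda| + 1$, which is precisely what prevents the pathological range $p < k < \ell(\lambda)$ from arising and disrupting the dominance bookkeeping.
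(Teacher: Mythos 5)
Your proof follows the paper's strategy exactly: identify the zero locus with $\bigsqcup_{\nu\not\unlhd\lambda}H_{l,\nu}$, then reduce the codimension computation to maximizing $\ell(\nu)$ over partitions not dominated by $\lambda$, with $(\lambda_1+1,1,\ldots,1)$ as the extremal example. The paper's proof merely \emph{asserts} that this $\mu'$ achieves the maximal length, whereas you actually supply the combinatorial upper bound (via $\ell(\nu)\ge n+l-\lambda_1\Rightarrow\nu\unlhd\lambda$), which is a worthwhile detail to have. One small inaccuracy: your claim $\overline{H_{l,\mu}}=\bigsqcup_{\nu\unrhd\mu}H_{l,\nu}$ is not an equality in general (e.g.\ with $l=1$, $n=4$, $\mu=(2,2)$, the stratum $H_{1,(3,1)}$ is not in $\overline{H_{1,(2,2)}}$ even though $(3,1)\unrhd(2,2)$); the closure of a stratum is the union of strata indexed by \emph{coarsenings} of $\mu$, which is a strictly smaller set. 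Only the inclusion $\overline{H_{l,\mu}}\subseteq\bigsqcup_{\nu\unrhd\mu}H_{l,\nu}$ holds, and that is all your argument actually uses, so this is a misstatement rather than a gap.
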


\begin{proof}
By the above remark, the algebraic set defined by $I_{l, \lambda}$ is the union of $H_{l, \mu}$ for all $\mu \in [P_{n+l-1}]_{\ge l}$ with $\mu \not \!\! \unlhd \lambda$. Among these partitions, $\mu'=(\lambda_1+1, 1, 1, \ldots)$ has the largest length $n+l-1-\lambda_1$, and hence $\operatorname{codim} I_{l, \lambda}= n- \dim S/I_{l, \lambda}=n-(n+l-1-\lambda_1)=\lambda_1-l+1$. 
\end{proof}

\begin{example}
For $\lambda=(3,3,1)$, $\stab{2, \lambda}$ consists of the following 11 elements 
$$
\ytableausetup{mathmode, boxsize=1.2em}
\begin{ytableau}
1 & 1 & 2  \\
3 & 4& 5    \\
6  \\  
\end{ytableau}, 
\quad  
\begin{ytableau}
1 & 1 & 2  \\
3 & 4& 6    \\
5 \\  
\end{ytableau},
\quad  
\begin{ytableau}
1 & 1 & 2  \\
3 & 5& 6    \\
4 \\  
\end{ytableau},
\quad  
\begin{ytableau}
1 & 1 & 3  \\
2 & 4& 5     \\
6\\  
\end{ytableau}, 
\quad  
\begin{ytableau}
1 & 1 & 3  \\
2 &  4 & 6     \\
5\\  
\end{ytableau}, 
\quad  
\begin{ytableau}
1 & 1 & 3  \\
2 &  5 & 6     \\
4\\  
\end{ytableau}, 
$$
$$
\ytableausetup{mathmode, boxsize=1.2em}
\begin{ytableau}
1 & 1 & 4  \\
2 & 3& 5    \\
6  \\  
\end{ytableau}, \quad 
\begin{ytableau}
1 & 1 & 4  \\
2 & 3& 6   \\
5  \\  
\end{ytableau}, \quad 
\begin{ytableau}
1 & 1 & 4  \\
2 & 5& 6    \\
3  \\  
\end{ytableau},  \quad 
\begin{ytableau}
1 & 1 & 5  \\
2 & 3& 6   \\
4  \\  
\end{ytableau},  \quad 
\begin{ytableau}
1 & 1 & 5  \\
2 & 4& 6   \\
3  \\  
\end{ytableau}, 
$$
so $I_{2, \lambda}$ is minimally generated by 11 elements. 
For a non-empty subset $F \subset [n]$, consider the ideal $P_F=(x_i-x_j \mid i, j \in F)$. Clearly, $P_F$ is a prime ideal of codimension $\# F-1$. 
By Corollary~\ref{Cor to main1}, $I_{3, \lambda}$ is a radical ideal whose minimal primes are  $P_F$ for $F \subset [n]$ either (i) $1 \in F$ and $\# F =3$, or  (ii) $1 \not \in F$ and $\# F =4$. 
\end{example}

\section{Under the opposite monomial order}
Philosophically, we next treat  the Gr\" obner basis of $I_{l,\lambda}$ with respect to the lexicographic order with $x_1 >x_2 >\cdots >x_n$, which is opposite to the one used in the previous section.  However, for notational simplicity, we keep using the lexicographic order with $x_1 < \cdots < x_n$, but we consider tableaux whose squares are bijectively filled by the multiset $\{1, \ldots, n-1, {\overbrace{n, \ldots, n}^{\text{$l$ copies}}}\}$.  For $\lambda \in [P_{n+l-1}]_{\ge l}$,  let  $\tab{\lambda, l}$ be the set of such tableaux of shape $\lambda$. As in the previous section, we can define the standard-ness of $T \in \tab{\lambda, l}$.   For example, the tableau $T$ in \eqref{stab(lambda, l)} below is standard.  Let $\stab{\lambda, l}$ be the subset of $\tab{\lambda, l}$ consisting of standard tableaux.  
By the same argument as Lemma~\ref{stab}, we have the following.

\begin{lemma}\label{stab2} 
For $\lambda \in [P_{n+l-1}]_{\ge l}$, $\{ f_T \mid T\in \stab{\lambda, l} \}$ forms a basis of the vector space spanned by  $\{ f_T \mid T\in \tab{\lambda,l} \}$. 
\end{lemma}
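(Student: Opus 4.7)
My plan is to mirror the proof of Lemma~\ref{stab} step by step, with the roles of the repeated entry $1$ (sitting at the top of its columns in $\tab{l, \lambda}$) replaced by $n$ (sitting at the bottom of its columns in $\tab{\lambda, l}$).

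For the spanning part, I follow Sagan's straightening via Garnir relations (\cite[\S 2.6]{Sa}): every $f_T$ with $T \in \tab{\lambda, l}$ is rewritten as a $K$-linear combination of $f_{T'}$ with $T' \in \stab{\lambda, l}$ by iteratively applying relations between adjacent columns $j$ and $j+1$. The classical argument works verbatim when at most one of these two columns contains $n$. If both contain $n$, I would concentrate on these two columns and reduce to the case $\lambda_1 = 2$, $l = 2$ in which both columns end with $n$ at the bottom. Letting $\widetilde\lambda \in P_{n-1}$ be the partition obtained by removing the bottom cell from each column, and $\widetilde T \in \tab{\widetilde\lambda}$ the classical tableau obtained from $T$ by deleting both copies of $n$, the remaining column sets $A_1, A_2$ are disjoint with $A_1 \cup A_2 = \{1,\ldots,n-1\}$, whence
$$f_T = \left(\prod_{i=1}^{n-1}(x_i-x_n)\right) \cdot f_{\widetilde T}.$$
Multiplying a classical Garnir relation $f_{\widetilde T} = \sum_k \pm f_{\widetilde{T_k}}$ by this prefactor yields the desired relation $f_T = \sum_k \pm f_{T_k}$ in $\tab{\lambda, l}$, where each $T_k$ is recovered from $\widetilde{T_k}$ by reinserting $n$ at the bottom of each of the two columns. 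The rest of Sagan's argument then proceeds without change. This is the dual analogue, for $\tab{\lambda,l}$, of the ``factor out $\prod_{i=2}^n(x_1-x_i)$'' trick used in the proof of Lemma~\ref{stab}.

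For linear independence, I would compute initial monomials under the lex order $x_1 < \cdots < x_n$. For any column-standard $T \in \tab{\lambda, l}$, writing $d_v$ for the row of the (unique) entry $v \in [n-1]$ and $j_1 < \cdots < j_l$ for the columns of $T$ containing $n$, one has
$$\ini{f_T} = \prod_{v=1}^{n-1} x_v^{d_v-1} \cdot x_n^{\sum_{k=1}^{l}(\lambda^\perp_{j_k}-1)}.$$
When $T \in \stab{\lambda, l}$ is standard, the row semi-standard condition forces the copies of $n$ in each row to occupy the right-most cells, so row $r$ contains exactly $\lambda_r - \#\{v < n : d_v = r\}$ copies of $n$; consequently the assignment $v \mapsto d_v$ (for $v < n$) alone reconstructs $T$. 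Since the exponents of $x_1, \ldots, x_{n-1}$ in $\ini{f_T}$ encode this assignment, distinct standard tableaux produce distinct initial monomials, which gives linear independence.

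The main technical point I expect to require care is the Garnir degeneracy in the double-$n$ case above; once the factorization is in place, the remaining steps are routine adaptations of the classical theory and of Lemma~\ref{stab}.
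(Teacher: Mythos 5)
Your proposal is correct and takes exactly the approach the paper intends: for Lemma~\ref{stab2} the paper simply writes ``By the same argument as Lemma~\ref{stab},'' and your write-up is a faithful dualization of that argument, with the $l$ copies of $n$ at the bottoms of columns playing the role of the $l$ copies of $1$ at the tops. Both the factorization $f_T = \bigl(\prod_{i=1}^{n-1}(x_i-x_n)\bigr)\cdot f_{\widetilde T}$ in the double-$n$ Garnir case (mirroring the paper's $f_T = \bigl(\prod_{i=2}^{n}(x_1-x_i)\bigr)\cdot f_{\widetilde T}$) and the initial-monomial injectivity via the row-assignment map $v \mapsto d_v$ for $v<n$ are the right analogues, and your check that the semi-standard condition forces the $n$'s to the right ends of rows, so that $(d_v)_{v<n}$ reconstructs a standard $T$, closes the linear-independence step. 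No gaps.
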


For $\lambda \in [P_{n+l-1}]_{\ge l}$, consider the ideal  
$$I_{\lambda,l}:=(f_T \mid T \in \tab{\lambda,l}) =(f_T \mid T \in \stab{\lambda,l} ).$$

For $\bm a \in K^n$, set 
$\bm a^{(l)}:=(a_1, \ldots, a_{n-1}, \overbrace{a_n, \ldots, a_n}^{\text{$l$ copies}}) \in K^{n+l-1}$ and $\Lambda^l(\bm a):=\Lambda(\bm a^{(l)}) \in [P_{n+l-1}]_{\ge l}$. 
Up to the automorphism of $S$ exchanging $x_1$ and $x_n$, the ideal $I_{\lambda, l}$ coincides with $I_{l,\lambda}$ treated in the previous section. 
Hence we have $I_{\mu, l} \subset I_{\lambda,l}$ for $\mu, \lambda \in [P_{n+l-1}]_{\ge l}$ with $\mu \unlhd \lambda$, and 
\begin{equation}\label{I_{F,l}=J}
I_{\lambda, l} =( f \in S \mid f(\bm a)=0 \, \text{for $\forall \bm a \in K^n$ with $\Lambda^l(\bm a) \! \! \not \! \unlhd \lambda$}).
\end{equation}

In $T \in \stab{\lambda,l}$, all $n$'s are in the bottom of their columns.  Let $w(T)$ denote the number of squares which locate above some $n$.  For example, if 
\begin{equation}\label{stab(lambda, l)}
T=\ytableausetup{mathmode, boxsize=1.2em}
\begin{ytableau}
1& 2& 3 & 5& 8 & 8  \\
4 & 6   & 8  \\
7 & 8\\
\end{ytableau}
\end{equation}
($n=8$ and $l=4$ in this case), then $w(T)=3$. In fact, the squares filled by 2, 6, and 3 are counted. 
For $T \in \tab{\lambda, l}$, the degree of $\ini{f_T}$ with respect to $x_n$ is $w(T)$. 
Let $\sh_{<n}(T) \in P_{n-1}$ denote the shape of $T'$, where  $T'$ is the tableau obtained by removing all squares filled by $n$ from $T$.  For example, if $T$ is the above one, we have $\sh_{<8}(T)=(4,2,1)$. 

For $\mu \in P_{n-1}$, set 
$$\< \mu \>^l := \{ \lambda \in [P_{n+l-1}]_{\ge l} \mid \text{$\exists T \in \stab{\lambda,l}$ with $\sh_{<n}(T)=\mu$}\}. $$
For $\lambda \in \< \mu \>^l$ and $T \in \stab{\lambda,l}$ with $\sh_{<n}(T)=\mu$, the positions of the squares filled by $n$ only depend on $\lambda$ and $\mu$. 
We call these squares {\it $n$-squares} of $\lambda$. 
Similarly, $w(T)$ does  not depend on a particular choice of $T$, and we denote this value by $w_\mu(\lambda)$. 

\begin{lemma}\label{maximum element}
Let $\lambda \in [P_{n+l-1}]_{\ge l}$, and $\Fc:=\{ \rho \in [P_{n+l-1}]_{\ge l} \mid \rho \unlhd \lambda \}$ the lower filter of $[P_{n+l-1}]_{\ge l}$. 
For $\mu \in P_{n-1}$, if $X:= \<\mu\>^l \cap \Fc$ is non-empty, then there is the element $\wlam \in X$ satisfying $w_\mu(\rho) > w_\mu(\wlam)$ for all $\rho \in X \setminus \{ \wlam \}$.    
\end{lemma}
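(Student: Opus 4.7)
The plan is to rewrite $w_\mu$ as a linear functional on the row-increment vector $\bm k(\rho):=(\rho_r-\mu_r)_{r\ge1}$ of $\rho\in\<\mu\>^l$, and then prove uniqueness of the minimum by an exchange argument. Existence is immediate since $X$ is finite and nonempty.

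First I would observe that, for $\rho\in\<\mu\>^l$, the $n$-squares of $\rho$ are precisely the cells of the horizontal strip $\rho/\mu$; the row-$r$ part of $\rho/\mu$ contains $\rho_r-\mu_r$ cells and each contributes $r-1$ squares above it, so
\[
w_\mu(\rho)\;=\;\sum_{r\ge1}(r-1)(\rho_r-\mu_r).
\]
Next I would translate $\rho\in X$ into constraints on $\bm k(\rho)$: the horizontal-strip conditions $0\le k_r\le \mu_{r-1}-\mu_r$ (with $\mu_0:=\infty$), the size equation $\sum_r k_r=l$, the row-length condition $k_1\ge l-\mu_1$ (encoding $\rho_1\ge l$), and $\rho\unlhd\lambda$, which becomes $\sum_{r\le R}k_r\le e_R$ for every $R$, where $e_R:=\sum_{r\le R}(\lambda_r-\mu_r)$.

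For the main step, suppose distinct $\rho,\rho'\in X$ both minimize $w_\mu$. Let $i$ be the smallest index where $\bm k(\rho)$ and $\bm k(\rho')$ differ; after possibly swapping $\rho$ and $\rho'$, assume $k_i(\rho)>k_i(\rho')$. Let $j>i$ be the smallest index with $k_j(\rho)<k_j(\rho')$, which exists because both vectors have sum $l$. Define $\rho''$ from $\rho'$ by $k_i(\rho'')=k_i(\rho')+1$, $k_j(\rho'')=k_j(\rho')-1$, and $k_r(\rho'')=k_r(\rho')$ otherwise. Then
\[
w_\mu(\rho'')-w_\mu(\rho')\;=\;(i-1)-(j-1)\;=\;i-j\;<\;0,
\]
so verifying $\rho''\in X$ yields a contradiction with the minimality of $\rho'$. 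The horizontal-strip upper bound at $i$ follows from $k_i(\rho')+1\le k_i(\rho)\le \mu_{i-1}-\mu_i$; non-negativity at $j$ from $k_j(\rho')>k_j(\rho)\ge 0$; and the row-length and partition conditions are routine consequences of these.

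The hard part will be the dominance check $\sum_{r\le R}k_r(\rho'')\le e_R$ on the window $R\in[i,j)$. This is where the minimality of $j$ is crucial: it yields $k_{r'}(\rho)\ge k_{r'}(\rho')$ for $i\le r'<j$, with strict inequality at $r'=i$, so that $\sum_{r\le R}k_r(\rho')+1\le\sum_{r\le R}k_r(\rho)\le e_R$. Combined with the trivial checks, this establishes $\rho''\in X$ and completes the contradiction.
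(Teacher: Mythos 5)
Your proof is correct, but it takes a genuinely different route from the paper's. You linearize $w_\mu$ as the weighted sum $\sum_r (r-1)(\rho_r-\mu_r)$ over the row-increment vector $\bm k(\rho)$ of the horizontal strip $\rho/\mu$ (this formula is right: the $n$-squares lie in distinct columns, so no square is counted twice), recast membership in $X$ as a system of linear inequalities on $\bm k(\rho)$, and then prove the minimizer of $w_\mu$ is unique by an exchange argument --- if two distinct $\rho,\rho'$ both minimize, the transfer $k_i\mapsto k_i+1$, $k_j\mapsto k_j-1$ applied to $\rho'$ stays feasible (the dominance check on the window $[i,j)$ uses minimality of $j$ together with feasibility of $\rho$) and strictly decreases $w_\mu$, a contradiction. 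The paper instead constructs $\wlam$ explicitly by a greedy rule (maximize $\wlam_1$, then $\wlam_2$ among survivors, etc.) and shows directly that any $\rho\neq\wlam$ in $X$ can be improved by raising an $n$-square, decreasing $w_\mu$ and moving up in dominance, until reaching $\wlam$. Each has merits: the paper's argument hands you $\wlam$ concretely, and the ``raise a square'' move is the same local operation reused in the Claim inside the proof of Proposition~\ref{keylemma2}, giving the paper a unified toolkit; your argument is shorter once the linearization is in place, cleanly separates existence (trivial) from uniqueness, and makes visible the discrete-convexity/exchange structure underlying the lemma. One small thing worth spelling out if you were to submit this: you should note, as you gesture at, that $\rho''$ is again a partition forming a horizontal strip over $\mu$ --- e.g.\ $\rho''_j\ge\rho''_{j+1}$ follows since $k_j(\rho')\ge 1$ forces $\rho'_j\ge\mu_j+1>\mu_j\ge\rho'_{j+1}$ --- but this is indeed routine.
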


\begin{proof}
We determine $\wlam =(\wlam_1, \wlam_2, \ldots, \wlam_p)$ inductively from $\wlam_1$.  
First, we set 
$$\wlam_1:=\max \{ \, \rho_1 \mid \rho=(\rho_1, \ldots, \rho_q)  \in X \, \} \quad \text{and} \quad  X_1:= \{\,  \rho \in X \mid \rho_1 =\wlam_1 \, \},$$ 
and next 
$\wlam_2:=\max \{ \rho_2 \mid \rho \in X_1 \}$ and $X_2 := \{ \rho \in X_1 \mid \rho_2 =\wlam_2 \}.$ 
We repeat this procedure until the sum $\wlam_1+\wlam_2 +\cdots$ reaches $n+l-1$. 

We will show that $\wlam$ has the expected property. 
For $\rho \in X \setminus \{ \wlam \}$, set $i_0:=\min\{ i \mid \rho_i \ne \lambda_i\}$. 
Then we have $\rho_{i_0} < \wlam_{i_0}$, and $\rho$ has an $n$-square in the $i$-th row for some $i >i_0$. 
Let $i_1$ be the smallest  $i$ with this property. Raising up the right most  square in the $i_1$-th row to the right end of $i_0$-th row, we get $\rho' \in X$ 
(here we use the present form of $\Fc$). 
It is clear that $w_\mu(\rho) > w_\mu(\rho')$ and $\rho \lhd \rho'$. Repeating this argument until our partition  reaches $\wlam$, we get the expected inequality.  
\end{proof}

\begin{example}
In the above lemma, the case $\lambda \ne \wlam$ might happen. For example, if $\lambda=(4,2,1)$ and $\mu=(3,3)$ (so $l=1$ now), we have $\wlam=(3,3,1)$. 
\end{example}

For a lower filter $\Fc \subset [P_{n+l-1}]_{\ge l}$ and a non-negative integer $k$, set 
$$\Fc^k := \{ \, \mu \in P_{n-1} \mid \text{$\exists \lambda \in \<\mu\>^l \cap \Fc$ with $w_\mu(\lambda) \le k$}  \, \}.$$

\begin{example}
Consider the case $n=6, l=2$, $\lambda=(3,3,1)$ and $\Fc:=\{ \, \rho \in [P_7]_{\ge 2} \mid 
\rho \unlhd \lambda \, \}$. Then $\Fc^3, \Fc^2,\Fc^1, \Fc^0$ are the lower filters of $P_5$ whose unique maximal elements are $(3,2), (3,1,1), (2,1,1,1)$ and $(1,1,1,1,1)$, respectively. In the following diagrams, $\star$'s represent the positions of $n$-squares of the corresponding partitions of $n+l-1 \, (=7)$. It is also easy to see that $\Fc^k=\Fc^3$ for all $k \ge 3$. 
$$
\lambda=
\ytableausetup{mathmode, boxsize=1em}
\begin{ytableau}
 {} & {} & {}   \\
 {} & {} &  {}  \\
{} \\
\end{ytableau}
\qquad \qquad
\ytableausetup{mathmode, boxsize=1em}
\begin{ytableau}
 {} & {} & {}   \\
 {} & {} &  \none[\star]  \\
\none[\star] \\
\end{ytableau}
\qquad \quad 
\ytableausetup{mathmode, boxsize=1em}
\begin{ytableau}
 {} & {} &   {}  \\
 {} &  \none[\star]  &  \none[\star]   \\
{} \\
\end{ytableau}
\qquad \quad 
\ytableausetup{mathmode, boxsize=1em}
\begin{ytableau}
 {} & {} &   \none[\star]  \\
 {} & \none[\star]    \\
{}  \\
{}  \\
\end{ytableau}
\qquad \quad 
\ytableausetup{mathmode, boxsize=1em}
\begin{ytableau}
 {} & \none[\star]   &   \none[\star]  \\
 {} \\
{}  \\
{}  \\
{} \\
\end{ytableau}
$$
\end{example}

\begin{lemma}\label{filter filter}
If $\Fc \subset [P_{n+l-1}]_{\ge l}$ is a lower filter,  then $\Fc^k$ is a lower filter of $P_{n-1}$. 
\end{lemma}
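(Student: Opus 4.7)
The plan is to reduce to the case where $\mu$ covers $\nu$ in $P_{n-1}$, and then, from a chosen witness $\lambda \in \<\mu\>^l \cap \Fc$ with $w_\mu(\lambda) \le k$, to produce an explicit witness $\lambda' \in \<\nu\>^l \cap \Fc$ with $w_\nu(\lambda') \le k$ by a local rearrangement of $\lambda^\perp$. Since the dominance order on $P_{n-1}$ is the transitive closure of its covering relations, I may assume $\mu$ covers $\nu$; Remark~\ref{cover} applied to $\mu \rhd \nu$ then yields indices $j < j'$ with $\nu^\perp_j = \mu^\perp_j + 1$, $\nu^\perp_{j'} = \mu^\perp_{j'} - 1$, and $\nu^\perp_i = \mu^\perp_i$ otherwise, and non-increasingness of $\nu^\perp$ forces $\mu^\perp_{j-1} > \mu^\perp_j$ (when $j > 1$) and $\mu^\perp_{j'} > \mu^\perp_{j'+1}$; call these (V1) and (V2). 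Write $\lambda^\perp = \mu^\perp + \chi_S$, where $\chi_S$ is the indicator sequence of an $l$-element subset $S$ recording the columns of $\lambda$ carrying $n$-squares, so $w_\mu(\lambda) = \sum_{i \in S} \mu^\perp_i$. The goal is to build $\lambda' \in \<\nu\>^l$ with $\lambda' \unlhd \lambda$ and $w_\nu(\lambda') \le w_\mu(\lambda)$; then $\lambda' \in \Fc$ since $\Fc$ is a lower filter, giving $\nu \in \Fc^k$.

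I plan to construct $\lambda'$ in two cases. If $j \in S$ and $j' \notin S$, take $\lambda' := \lambda$: a direct check gives $\lambda^\perp = \nu^\perp + \chi_T$ with $T = (S \setminus \{j\}) \cup \{j'\}$, exhibiting $\lambda \in \<\nu\>^l$, and
\[
w_\nu(\lambda) = w_\mu(\lambda) + \mu^\perp_{j'} - \mu^\perp_j - 1 \le w_\mu(\lambda) - 1
\]
using $\mu^\perp_{j'} \le \mu^\perp_j$. In every other case, let $p$ be the leftmost position in the $\lambda^\perp$-plateau containing $j$, and $q$ the rightmost in the $\lambda^\perp$-plateau containing $j'$; define $\lambda'^\perp$ from $\lambda^\perp$ by the single-box move $\lambda'^\perp_p = \lambda^\perp_p + 1$, $\lambda'^\perp_q = \lambda^\perp_q - 1$, and $\lambda'^\perp_i = \lambda^\perp_i$ for all other $i$. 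The plateau definitions make $\lambda'^\perp$ non-increasing, and $p \le j < j' \le q$ gives $\lambda' \unlhd \lambda$.

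The hard part will be checking, in the shift case, that the difference $T := \lambda'^\perp - \nu^\perp$ is a $0/1$ sequence summing to $l$ (so that $\lambda' \in \<\nu\>^l$) and that $w_\nu(\lambda') = w_\mu(\lambda)$. Both hinge on two structural facts that follow from (V1), (V2) and the run structure of $\mu^\perp$: if $p < j$ then $p \notin S$ and $\mu^\perp_p = \mu^\perp_j + 1$, and dually if $q > j'$ then $q \in S$ and $\mu^\perp_q = \mu^\perp_{j'} - 1$. Given these, $T$ equals $S$, $(S \setminus \{j\}) \cup \{p\}$, or $(S \setminus \{q\}) \cup \{j'\}$ according to whether $(p,q) = (j,j')$, $(p < j, q = j')$, or $(p = j, q > j')$---the remaining combination $(p < j, q > j')$ is excluded, as it would force $j \in S$ and $j' \notin S$, the case handled separately---and a short arithmetic check then yields $w_\nu(\lambda') = w_\mu(\lambda)$ in each sub-case.
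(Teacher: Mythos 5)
Your proposal is correct and has the same high-level structure as the paper's proof: reduce to the case where $\mu$ covers $\nu$, and from a witness $\lambda \in \<\mu\>^l \cap \Fc$ with $w_\mu(\lambda)\le k$, construct a witness $\lambda' \in \<\nu\>^l \cap \Fc$ with $w_\nu(\lambda')\le k$ by a local change. The paper phrases this pictorially---distinguishing four cases by whether the $j$-th and $j'$-th columns of $\lambda$ carry an $n$-square (i.e.\ whether $j,j' \in S$), setting $\rho=\lambda$ in the case ``$j$ yes, $j'$ no'', and otherwise ``moving the $n$-squares vertically along the change from $\mu$ to $\nu$'' and then applying a column permutation. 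Your formulation is the algebraic counterpart: your first case ($j\in S$, $j'\notin S$, $\lambda'=\lambda$) is exactly the paper's case (4), and your shift case (a single box move $\lambda'^\perp=\lambda^\perp+\chi_p-\chi_q$ with $p,q$ pinned down by plateau boundaries) carries out the paper's cases (1)--(3) together with the implicit column permutation in one stroke. Your version has the merit of making the paper's ``apply a suitable column permutation'' completely explicit via $p$ and $q$, and of yielding closed formulas for $T=\lambda'^\perp-\nu^\perp$.

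One small imprecision: the claimed exact equality $w_\nu(\lambda')=w_\mu(\lambda)$ ``in each sub-case'' is not quite right. In the sub-case $(p,q)=(j,j')$ one gets $T=S$ and a direct computation gives
$$w_\nu(\lambda')=\sum_{i\in S}\nu^\perp_i=w_\mu(\lambda)+[j\in S]-[j'\in S],$$
which equals $w_\mu(\lambda)-1$ when $j\notin S$ and $j'\in S$ (a configuration that is not excluded by being outside your first case). The bracketed correction is $\le 0$ precisely because the case $j\in S$, $j'\notin S$ has been handled separately, so the needed inequality $w_\nu(\lambda')\le w_\mu(\lambda)\le k$ still holds; but the statement should read $\le$, not $=$. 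The other two sub-cases do give exact equality, as your structural facts $\mu^\perp_p=\mu^\perp_j+1$ (when $p<j$) and $\mu^\perp_q=\mu^\perp_{j'}-1$ (when $q>j'$) show.
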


\begin{proof}
It suffices to show that if $\mu \in \Fc^k$ covers $\nu \in P_{n-1}$ then $\nu \in  \Fc^k$.  In this situation, there are two integers $j, j'$ with $j< j'$ such that $\nu^\perp_j=\mu^\perp_j+1$, $\nu^\perp_{j'}=\mu^\perp_{j'}-1$, and $\nu^\perp_i=\mu^\perp_i$ for all $i \ne j, j'$. In other words,  moving a square in the $j'$-th column of $\mu$ to the $j$-th column, we get $\nu$. Anyway, we can take $\lambda \in \Fc \cap \<\mu \>^l$ with $w_\mu(\lambda) \le k$,  and we want to construct $\rho \in \Fc \cap \<\nu \>^l$ with $w_\nu(\rho) \le k$. 

For each $i$, we have $\mu_i^\perp \le \lambda_i^\perp \le \mu_i^\perp+1$, and there is an $n$-square in the $i$-th column of $\lambda$ if and only if  $\lambda_i^\perp = \mu_i^\perp+1$. We have the following four cases.
\begin{itemize}
\item[(1)] $\lambda^\perp_j=\mu^\perp_j$ and  $\lambda^\perp_{j'}=\mu^\perp_{j'}$. 
\item[(2)] $\lambda^\perp_j=\mu^\perp_j+1$ and  $\lambda^\perp_{j'}=\mu^\perp_{j'}+1$. 
\item[(3)] $\lambda^\perp_j=\mu^\perp_j$ and  $\lambda^\perp_{j'}=\mu^\perp_{j'}+1$.
\item[(4)] $\lambda^\perp_j=\mu^\perp_j+1$ and  $\lambda^\perp_{j'}=\mu^\perp_{j'}$.  
\end{itemize} 
In the case (4), we set $\rho=\lambda$, that is,  exchanging the $n$-square in the $j$-th column and the bottom square of $j'$-th column, we get $\rho$ and $\nu$ from $\lambda$ and $\mu$.  In the other cases, we first move the $n$-squares in the $j$-th and $j'$-th columns of $\lambda$ (their existence depends on the cases (1)-(3))  vertically along the change from $\mu$ to $\nu$. 
For example, in the case (2),  the above operation is  
\begin{equation}\label{move of n squares}
 \ytableausetup{mathmode, boxsize=1.1em}
\begin{ytableau}
\none[\text{\tiny{$j$}}] & \none & \none[\text{\tiny{$j'$}}] \\
 {} & \none[\hdots] & {}   \\
 {} & \none[\hdots]  & {} \\
{}  &\none & n\\
n
\end{ytableau}
\quad 
\begin{ytableau}
\none \\
 \none   \\
\none[\too] \\
\none \\ 
\none
\end{ytableau}
\quad 
 \ytableausetup{mathmode, boxsize=1.1em}
\begin{ytableau}
\none[\text{\tiny{$j$}}] & \none & \none[\text{\tiny{$j'$}}] \\
 {} & \none[\hdots] & {}   \\
 {} & \none[\hdots]  & n\\
  \\
 \\
n
\end{ytableau}
\end{equation}
 Furthermore, if necessary, 
we apply a suitable column permutation as the following figure (in this situation, since $\mu^\perp_{j-1}=\nu^\perp_{j-1} \ge \nu^\perp_j=\mu^\perp_j +1$, we have $\mu^\perp_{j-1} > \mu^\perp_j$ and there is no $n$-square in the $(j-1)$-st column of the left and the middle diagrams).   
In any cases, we have $\rho \unlhd \lambda \in \Fc$ and $w_\nu(\rho) \le w_\mu(\lambda) \le k$, that is,  $\rho$ satisfies the expected property. 
$$
\ytableausetup{mathmode, boxsize=1.1em}
\begin{ytableau}
\none[\vdots] & \none[\vdots] & \none[\tiny{\text{$j$}}] \\
 {} & {} & {}   \\
 {} & {} & n \\
{} \\
\none[\vdots] \\
\end{ytableau}
\qquad 
\begin{ytableau}
\none \\
 \none   \\
\none[\too] \\
\none \\ 
\none
\end{ytableau}
\qquad 
\begin{ytableau}
\none[\vdots] & \none[\vdots] & \none[\tiny{\text{$j$}}] \\
 {} & {} & {}   \\
 {} & {} & {} \\
{} & \none & n \\
\none[\vdots] \\
\end{ytableau}
\qquad 
\begin{ytableau}
\none \\
 \none   \\
\none[\too] \\
\none \\ 
\none
\end{ytableau}
\qquad 
\begin{ytableau}
\none[\vdots] & \none[\vdots] & \none[\tiny{\text{$j$}}] \\
 {} & {} & {}   \\
 {} & {} & {} \\
{} & n \\
\none[\vdots] \\
\end{ytableau}
$$
\end{proof}

\begin{proposition}\label{keylemma2} 
Let $\lambda \in [P_{n+l-1}]_{\ge l}$, and $\Fc:=\{ \rho \in [P_{n+l-1}]_{\ge l} \mid \rho \unlhd \lambda \}$ the lower filter of $[P_{n+l-1}]_{\ge l}$. 
If $f \in I_{\lambda,l}$ is of the form $f= g_d x_n^d  + \cdots +  g_1 x_n +g_0 $ with $g_0,\dots,g_d \in K[x_1,\dots,x_{n-1}]$ and $g_d \ne 0$,
then $g_0,\dots,g_d$ belong to $I_{\Fc^d}$. 
\end{proposition}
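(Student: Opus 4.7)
I would adapt the strategy of Lemma~\ref{keylemma} to the present setting. First, I observe that by Theorem~\ref{HWMOY} applied to the classical ($l=1$) Specht ideals in the $n-1$ variables $x_1,\ldots,x_{n-1}$ together with the lower filter $\Fc^d\subset P_{n-1}$ supplied by Lemma~\ref{filter filter}, the ideal $I_{\Fc^d}$ is radical and equals
\[
I_{\Fc^d}=\{g\in K[x_1,\ldots,x_{n-1}]\mid g(\bm b)=0\ \text{for every $\bm b\in K^{n-1}$ with $\Lambda(\bm b)\not\in\Fc^d$}\}.
\]
It therefore suffices to verify that $g_i(\bm b)=0$ for every $i$ and every $\bm b$ with $\nu:=\Lambda(\bm b)\not\in\Fc^d$. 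Fixing such a $\bm b$, I set $F(\alpha):=f(\bm b,\alpha)=\sum_{i=0}^d g_i(\bm b)\alpha^i\in K[\alpha]$; since $\deg F\le d$, the task reduces to producing $d+1$ distinct roots of $F$.

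For the combinatorial core, I invoke the vanishing characterization \eqref{I_{F,l}=J}, giving $F(\alpha)=0$ whenever $\rho_\alpha:=\Lambda^l(\bm b,\alpha)\not\unlhd\lambda$. A direct tableau construction (filling $\nu$ with a standard tableau on $\{1,\ldots,n-1\}$ and inserting the $l$ copies of $n$ into the newly added boxes) shows that $\rho_\alpha\in\langle\nu\rangle^l$ for every $\alpha\in K$, and that the $n$-squares of $\rho_\alpha$ occupy the columns $m_\alpha+1,\ldots,m_\alpha+l$, where $m_\alpha$ denotes the multiplicity of $\alpha$ in $\bm b$ (taken to be $0$ when $\alpha$ does not occur in $\bm b$). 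Therefore
\[
w_\nu(\rho_\alpha)=\sum_{j=m_\alpha+1}^{m_\alpha+l}\nu^\perp_j,
\]
and the hypothesis $\nu\not\in\Fc^d$ is precisely the statement that any $\rho\in\langle\nu\rangle^l$ with $w_\nu(\rho)\le d$ satisfies $\rho\not\unlhd\lambda$. Hence every $\alpha$ whose multiplicity $m_\alpha$ satisfies $\sum_{j=m_\alpha+1}^{m_\alpha+l}\nu^\perp_j\le d$ contributes a root of $F$.

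From here the argument splits into two cases mirroring (i) and (ii) of Lemma~\ref{keylemma}. In case (i), when $\sum_{j=1}^l\nu^\perp_j\le d$, every new $\alpha\in K\setminus\{\beta_1,\ldots,\beta_q\}$ is a root of $F$; since $K$ is infinite, $F\equiv 0$. In case (ii), when $\sum_{j=1}^l\nu^\perp_j>d$, I would apply Lemma~\ref{maximum element} to the nonempty set $X=\langle\nu\rangle^l\cap\Fc$ to isolate its unique $w_\nu$-minimal element $\widetilde\lambda$, and use the column positions of the $n$-squares of $\widetilde\lambda$ to guide the selection of $d+1$ distinct existing values $\beta_k$ whose associated shapes $\rho^{B,k}$ lie outside $\Fc$.

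The main technical obstacle will be case (ii). A naive count using only the monotonicity of $W_l(m):=\sum_{j=m+1}^{m+l}\nu^\perp_j$ yields at least $\nu^\perp_{M^*}>d/l$ distinct good values, which falls short of $d+1$ once $l\ge 2$. Bridging this gap is expected to require either exploiting higher-order vanishing of $F$ at the repeated values $\beta_k$---reflecting that $f$ vanishes on entire stratum closures where $\Lambda^l$ is constant, not only on individual points---or leveraging the extremal combinatorics of $\widetilde\lambda$ through a careful comparison of $\widetilde\lambda^\perp$ with $\nu^\perp$.
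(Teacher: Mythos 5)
Your setup is correct and closely parallels the paper's framing: reduce to showing $g_i(\bm b)=0$ for every $\bm b$ with $\nu:=\Lambda(\bm b)\notin\Fc^d$, compute $w_\nu(\rho_\alpha)=\sum_{j=m_\alpha+1}^{m_\alpha+l}\nu^\perp_j$, and in case (i), when a generic $\alpha$ already gives $w_\nu(\rho_\alpha)\le d$, conclude $F\equiv 0$ from infinitude of $K$. The real issue is the one you flag yourself: in case (ii) counting \emph{distinct} roots is genuinely insufficient, and this is not a bookkeeping problem. For instance, with $l=2$, $\lambda=(3,3,2)$, $\nu=(3,3)$, $d=2$, only the two existing values $\beta_1,\beta_2$ vanish under the naive criterion, yet $d+1=3$ roots are required; the shortfall is unavoidable without more information about $F$.

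The paper closes this gap not by further manipulating $F$ as an abstract element of the vanishing ideal, but by returning to an explicit generating expression $f=\sum_{T\in\stab{\lambda',l},\,\lambda'\in\Fc}h_T f_T$. It introduces \emph{regular $\bm a$-tableaux} and proves a Claim by downward induction on $\sh_{<n}(\Tc)$ in the dominance order: for any regular $\bm a$-tableau $\Tc$ with $\sh(\Tc)\in\Fc$ and each $i$, the value $\alpha_i$ occurs at least $d_i$ times above the $n$-squares, where $d_i$ is read off the extremal shape $\wlam$ of Lemma~\ref{maximum element}. This forces each nonzero $f_T(\bm a,x_n)$, and hence $f(\bm a,x_n)$, to be divisible by $R(x_n)=\prod_i(x_n-\alpha_i)^{d_i}$, whose degree $\sum d_i=w_\mu(\wlam)>d$ contradicts $\deg_{x_n}f\le d$. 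So your suggestion of ``higher-order vanishing at the repeated values'' is exactly the right direction, but it cannot be extracted from the vanishing-locus characterization~\eqref{I_{F,l}=J} alone; it requires the structural fact that the Gr\"obner-basis generators $f_T$ themselves vanish to the correct multiplicities, which is the substantive combinatorial content of Proposition~\ref{keylemma2} and is not supplied by your argument.
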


\begin{proof}
Assume that $g_m \not \in I_{\Fc^d}$ for some $m$. By the classical case (i.e., when $l=1$) of Corollary~\ref{Cor to main1}, there are some $\bm a \in K^{n-1}$ such that $\mu :=\Lambda(\bm a) \not \in \Fc^d$ and $g_m(\bm a) \ne 0$.  If $\mu =(\mu_1, \ldots, \mu_p)$, there are distinct elements $\alpha_1, \ldots, \alpha_p \in K$ such that $\alpha_i$ appears $\mu_i$ times in $\bm a$ for $i=1, \ldots, p$. 

We have 
\begin{equation}\label{expression}
f= \sum_{\substack{T \in \stab{\lambda', l} \\ \lambda' \in \Fc}}h_T \cdot f_T
\end{equation}
for some $h_T \in S$. For $T\in \stab{\lambda', l}$, replacing $i$ with $a_i$ in $T$ for all $1 \le i \le n-1$, and $n$ with $x_n$, we get the tableau $T(\bm a)$ whose entries are elements of $K \cup \{ x_n \}$. 

Take $\rho \in \< \nu \>^l$ for some $\nu \in P_{n-1}$. We call  a bijective filling $\Tc $of the squares of the Young diagram of  $\rho$ by the multiset
$$\{  \, \overbrace{\alpha_1, \ldots, \alpha_1}^{\text{$\mu_1$ copies}}, \overbrace{\alpha_2, \ldots, \alpha_2}^{\text{$\mu_2$ copies}},\cdots, \overbrace{\alpha_p, \ldots, \alpha_p}^{\text{$\mu_p$ copies}},  \overbrace{x_n, \ldots, x_n}^{\text{$l$ copies}} \, \}$$ 
such that all $n$-squares  are filled by $x_n$ is called an {\it $\bm a$-tableau}. We call $\rho$ the {\it shape} of $\Tc$, and denote it by $\sh(\Tc)$. We also denote $\nu$ by $\sh_{<n}(\Tc)$. 
A typical example of an $\bm a$-tableau is $T(\bm a)$ given above. We say an $\bm a$-tableau $\Tc$ is {\it regular}, if the entries in the $j$-th column of $\Tc$ are all distinct for each $j$. 
Note that $T(\bm a)$ is regular if and only if $f_T(\bm a, x_n) \ne 0$. 

For all $\alpha \in K$, we can show that $\bar{\mu}:=\Lambda^l(\bm a, \alpha)$ belongs to $\< \mu \>^l$ 
(recall that $\mu=\Lambda(\bm a)$). For example, if $\alpha \not \in \{ \alpha_1, \ldots, \alpha_p \}$, we have $\bar{\mu}=(\mu_1, \ldots, \mu_j, l, \mu_{j+1}, \ldots, \mu_p)$, where $j:=\max \{ i \mid \mu_i \ge l \}$, and hence $\bar{\mu}_i \ge \mu_i$ and $\mu_i^\perp \le \bar{\mu}_i^\perp \le \mu_i^\perp+1$ for all $i$.  The case  $\alpha \in \{ \alpha_1, \ldots, \alpha_p \}$ can be shown by a similar argument. 
Anyway, if $\< \mu \>^l \cap \Fc = \emptyset$,  then $\Lambda^l(\bm a, \alpha) \not \in \Fc$, and hence $f(\bm a, \alpha)=0$  by \eqref{I_{F,l}=J}.  So it implies that $f(\bm a, x_n)=0$ and $g_m(\bm a)=0$.  This is a contradiction. So $\< \mu \>^l \cap \Fc$ is non-empty, and it has the element $\wlam$ with the minimum $w_{\mu}(-)$ by Lemma~\ref{maximum element}.  Let $\wTc$ be an $\bm a$-tableau of shape $\wlam$ with $\sh_{<n}(\wTc)=\mu$ such that all squares in the $i$-th row of $\mu$ are filled by $\alpha_i$. 
Assume that, for each $i$ with $1 \le i \le p$, $\alpha_i$ appears $d_i$ times in squares above some $n$-squares in $\wTc$.  See Example~\ref{a-tableau} below. 
 We have $\sum_{i=1}^p d_i =w_\mu(\lambda) >d$, where the inequality follows from that $\mu \not \in \Fc^d$. 

\medskip

\noindent{\bf Claim.} Let $\Tc$ be a regular $\bm a$-tableau with $\sh(\Tc) \in \Fc$.  For all $i$ with $1 \le i \le p$, $\alpha_i$ appears at least $d_i$ times in squares above some $n$-squares in $\Tc$.  

\medskip

\noindent{\it Proof of Claim.} Set $\nu := \sh_{<n}(\Tc) \in P_{n-1}$. 
We will prove the assertion by induction on $\nu$ with respect to the dominance order. 
Since $\Tc$ is regular, it is easy to see that  $\mu \unlhd \nu$ by the classical case (i.e., when $l=1$) of Corollary~\ref{Cor to main1}. 
If  $\mu=\nu$, applying suitable actions of column stabilizers (i.e., permutations of entries in the same column), we may assume that each square in the $i$-th row of $\Tc$ is filled by $\alpha_i$ or $x_n$. So the assertion can be shown by an argument similar to the proof of Lemma~\ref{maximum element}.  Next consider the case $\mu \lhd \nu$.  As the induction hypothesis, we assume that the assertion holds for $\Tc'$ with $\mu \unlhd \sh_{<n}(\Tc') \lhd \nu$. 

To proceed with proof by contradiction, assume that $\Tc$ does not satisfy the expected condition, that is, there is some $s$ such that $\alpha_s$ appears less than $d_s$ times in squares above some $n$-squares in $\Tc$.  Since $\mu \lhd \nu$ now, there are some $t$ and $j,j'$ with $j <j'$ such that $\alpha_t$ appears in the $j'$-th column of  $\Tc$,  but does not appear in the $j$-th column. If $\alpha_s$ has this property, we take $s$ as $t$. 
We move the square in the $j'$-th column filled by $\alpha_t$ to the $j$-th column, 
and get the partition $\nu' \in P_{n-1}$ (a suitable column permutation might be required). 
The following condition is crucial. 
\begin{itemize}
\item[$(*)$] $s=t$ holds, and the bottom of the $j$-th column of $\Tc$ is an $n$-square, and that of the $j'$-th column is not.
\end{itemize}
In the case $(*)$ is not satisfied, we move the $n$-squares in these columns (if they exist) vertically like \eqref{move of n squares}, then apply a suitable column permutation if necessary (sometimes, we have to move the $j$-th column to left and/or the $j'$-th column to right).  Finally, we get an $\bm a$-tableau $\Tc'$ with $\sh_{<n}(\Tc')=\nu'$.  
On the other hand, if $(*)$ holds, we move the $n$-square in the $j$-th column to below the bottom of the $j'$-th column. Of course, do not forget to move the $\alpha_s$-square in the $j'$-th column to the $j$-th column.  Applying a suitable column permutation if necessary, we get $\Tc'$ with $\sh_{<n}(\Tc')=\nu'$.  In this case, we have $\sh(\Tc)=\sh(\Tc')$. 

In both cases, $\Tc'$ is regular, and $\alpha_s$ appears less than $d_s$ times in squares above some $n$-squares in $\Tc'$. Moreover, we have $\sh(\Tc') \unlhd \sh(\Tc)$, and hence $\sh(\Tc') \in \Fc$. 
Since $(\mu \unlhd) \, \nu' \lhd \nu$, it contradicts the induction hypothesis. \qed 

\medskip

We back to the proof of the proposition itself.  In \eqref{expression}, we have 
$$
f(\bm a, x_n)= \sum
h_T(\bm a, x_n) \cdot f_T(\bm a, x_n)
$$
If $f_T(\bm a, x_n) \ne 0$, then $T(\bm a)$ is regular. So, by Claim, $f_T(\bm a, x_n)$ can be divided by 
$$R(x_n)=\prod_{1 \le i \le p}(x_n-\alpha_i)^{d_i},$$ and $f(\bm a, x_n)$ itself can be divided by 
$R(x_n)$.  While the degree of $f(\bm a, x_n)$ is at most $d$, we have $\deg f(\bm a, x_n) \ge \deg R(x_n) = \sum_{i=1}^p d_i >d$.  This is a contradiction. 
\end{proof}

\begin{example}\label{a-tableau}
Consider the case $n=8, l=3$, and take the lower filter given by 
$\Fc =\{ \lambda \in [P_{10}]_{\ge 3} \mid \lambda \unlhd (4,4,2)\}$.  
If $\bm a =(\alpha_1, \alpha_1, \alpha_1, \alpha_2, \alpha_2, \alpha_3, \alpha_3)$ (hence $\mu=(3,2,2)$), the $\bm a$-tableau $\wTc$ given in the proof of Proposition~\ref{keylemma2} is as follows 
$$
\ytableausetup{mathmode, boxsize=1.3em}
\begin{ytableau}
\alpha_1 & \alpha_1 & \alpha_1 & x_8  \\
\alpha_2 & \alpha_2   & x_8  \\
\alpha_3 & \alpha_3 \\
x_8
\end{ytableau}
$$
Above three $n\, (=8)$-boxes, there are two copies of $\alpha_1$, so we have $d_1=2$. 
Similarly, since there is one $\alpha_2$ (resp. $\alpha_3$) above  three $n$-boxes, we have $d_2=1$ (resp. $d_3=1$). 

The following are examples of regular $\bm a$-tableaux whose shape belong to $\Fc$.
In each case, there are at least 2 (resp. 1) $\alpha_1$ (resp. $\alpha_2$ and $\alpha_3$)  above $n$-squares. 
$$
\ytableausetup{mathmode, boxsize=1.3em}
\begin{ytableau}
\alpha_1 & \alpha_1 & \alpha_1 & x_8  \\
\alpha_2 & \alpha_2    \\
\alpha_3 & \alpha_3 \\
x_8 & x_8 
\end{ytableau}
\qquad \qquad 
\begin{ytableau}
\alpha_1 & \alpha_1 & \alpha_1 & \alpha_2 \\
\alpha_2 & \alpha_3 & x_8 & x_8  \\
\alpha_3 &  x_8
\end{ytableau}
\qquad \qquad 
\begin{ytableau}
\alpha_1 & \alpha_1 & \alpha_1 & \alpha_2 \\
\alpha_2 & \alpha_3 & \alpha_3 & x_8  \\
x_8 &  x_8
\end{ytableau}
$$
\end{example}

\begin{theorem}\label{main1.5}
For $\lambda \in [P_{n+l-1}]_{\ge l}$, set $\Fc:=\{ \rho \in [P_{n+l-1}]_{\ge l} \mid \rho \unlhd \lambda \}$ be the lower filter. Then $\{ f_T \mid T \in  \stab{\rho,l}, \rho \in \Fc  \}$ is a Gr\"obner basis of $I_{\lambda,l}$.  
\end{theorem}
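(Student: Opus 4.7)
The plan is to apply the standard division criterion for Gr\"obner bases. The inclusion $G := \{ f_T \mid T \in \stab{\rho, l},\, \rho \in \Fc \} \subset I_{\lambda, l}$ is immediate from $I_{\rho, l} \subset I_{\lambda, l}$ for $\rho \unlhd \lambda$ (noted in the paragraph preceding Lemma~\ref{stab2}), so the real task is to show that for every nonzero $f \in I_{\lambda, l}$ there exist $\rho \in \Fc$ and $T \in \stab{\rho, l}$ with $\ini{f_T}$ dividing $\ini{f}$.

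Write $f = g_d x_n^d + g_{d-1} x_n^{d-1} + \cdots + g_0$ with $g_0, \ldots, g_d \in K[x_1, \ldots, x_{n-1}]$ and $g_d \ne 0$, so that $\ini{f} = x_n^d \cdot \ini{g_d}$. Proposition~\ref{keylemma2} then gives $g_d \in I_{\Fc^d}$, and Lemma~\ref{filter filter} ensures that $\Fc^d$ is a lower filter of $P_{n-1}$. The classical Haiman--Woo theorem (Theorem~\ref{HWMOY}), applied to $I_{\Fc^d} \subset K[x_1, \ldots, x_{n-1}]$, therefore produces $\mu \in \Fc^d$ and $T' \in \stab{\mu}$ such that $\ini{f_{T'}}$ divides $\ini{g_d}$.

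By the definition of $\Fc^d$ we can then choose $\rho \in \<\mu\>^l \cap \Fc$ with $w_\mu(\rho) \le d$. Since the $n$-squares of $\rho$ relative to $\mu$ sit at the bottoms of their columns and at the right ends of their rows, filling those squares by $n$ and filling the remaining squares as in $T'$ produces a tableau $T \in \stab{\rho, l}$ with $\sh_{<n}(T) = \mu$. An initial-monomial computation analogous to \eqref{in(f)}---each $x_i$ with $i < n$ contributes $x_i^{d_i - 1}$ with $d_i$ equal to the row of $i$, which is the same in $T$ and in $T'$, while $x_n$ contributes $x_n^{\sum_c (\rho^\perp_c - 1)}$ summed over columns $c$ containing an $n$-square, i.e.\ $x_n^{w_\mu(\rho)}$---yields $\ini{f_T} = x_n^{w_\mu(\rho)} \cdot \ini{f_{T'}}$. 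Combined with $w_\mu(\rho) \le d$ and $\ini{f_{T'}} \mid \ini{g_d}$, this gives $\ini{f_T} \mid x_n^d \cdot \ini{g_d} = \ini{f}$, as required.

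The technical heart of the argument has already been dealt with in Proposition~\ref{keylemma2}, the counterpart here of \cite[Lemma~3.3]{MOY}. Compared with the proof of Theorem~\ref{main1}, the present argument is non-inductive in $n$: since Proposition~\ref{keylemma2} lands inside the classical ($l = 1$) Specht-filter ideal on $K[x_1, \ldots, x_{n-1}]$, Theorem~\ref{HWMOY} can be applied directly as a black box. The only step where I expect to pause is the verification that the extension of $T'$ to $T$ lies in $\stab{\rho, l}$ and produces the expected factorization of $\ini{f_T}$; both assertions, however, are essentially built into the definitions of ``$n$-square'' and $\<\mu\>^l$ introduced in this section.
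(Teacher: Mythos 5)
Your proposal is correct and follows essentially the same route as the paper's proof: decompose $f$ by powers of $x_n$, apply Proposition~\ref{keylemma2} to land $g_d$ in $I_{\Fc^d}$, invoke Theorem~\ref{HWMOY} there to find a classical standard tableau dividing $\ini{g_d}$, and then lift along $\<\mu\>^l$ to a tableau in $\stab{\rho,l}$ whose initial monomial divides $\ini{f}$. The only deviations are cosmetic: you make the inclusion $G \subset I_{\lambda,l}$ and the appeal to Lemma~\ref{filter filter} explicit, and you spell out the $x_n$-exponent computation $\ini{f_T} = x_n^{w_\mu(\rho)} \cdot \ini{f_{T'}}$, all of which the paper leaves implicit.
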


\begin{proof}
It suffices to show that the initial monomial $\ini{f}$ for all $0 \ne f \in I_{\lambda,l}$ can be divided by $\ini{f_T}$ for some $T \in  \stab{\rho,l}$ with $\rho \in \Fc$. Let $f= g_d x_n^d  + \cdots +  g_1 x_n +g_0$, where $g_i \in K[x_1,\ldots,x_{n-1}]$ and $g_d\neq0$. By Proposition~\ref{keylemma2}, one has $g_d \in I_{\Fc^d}$.   By Theorem~\ref{HWMOY},    $\{ f_T \mid T \in  \stab{\mu}, \mu \in \Fc^d  \}$ is a Gr\"obner basis of $I_{\Fc^d}$ (since we fix the monomial order, it is enough to consider standard tableaux, see \cite[Remark~3.5]{MOY}), and there is a tableau $T \in \stab{\mu}$ for  some $\mu \in \Fc^d$ such that $\ini{f_T}$ divides $\ini{g_d}$. So we can take $\rho \in \<\mu \>^l \cap \Fc$ with $e:=w_\mu(\rho) \le d$. Let us consider the tableau $T' \in \stab{\rho,l}$ such that the image of each $i = 1, \dots,n-1$ is same for $T$ and $T'$. 
Since we have $\ini{f}=x_n^d \cdot \ini{g_d}$ and $\ini{f_{T'}}=x_n^e \cdot \ini{f_T}$, $\ini{f_{T'}}$ divides $\ini{f}$. 
\end{proof}

\begin{example}
Contrary to Theorem~\ref{main1}, Theorem~\ref{main1.5} cannot be generalized to the ideal $I_{\Fc,l}:=(f_T \mid T \in \tab{\lambda,l}, \lambda \in \Fc)$ for a lower filter $\Fc \subset [P_{n+l-1}]_{\ge l}$.  
For example, if  $\Fc \subset [P_8]_{\ge 2}$ is the lower filter whose maximal elements are $(4,2,1,1)$ and $(3,3,2)$, then $x_4^2x_5^3x_6x_7^2$ is a minimal generator of $\ini{I_{\Fc, 2}}$, but this cannot be represented in the form of $\ini{f_T}$ for $T \in \stab{\lambda,2}$. 
\end{example}

The following fact might be well-known to the specialist, and is stated in \cite{MOY} without proof. This time, we give a proof for the reader's convenience. 
\begin{lemma}\label{product of linear forms}
Let $I \subset S=K[x_1, \ldots, x_n]$ be a graded ideal, and $G \subset I$ a Gr\"obner basis of $I$ with respect to the lexicographic order $<$ with   $x_1 < x_2 <\cdots < x_n$. If all elements of $G$ are products of linear forms, then $G$ is a Gr\"obner basis of $I$ with respect to any monomial order $\prec$ with   $x_1 \prec x_2  \prec \cdots \prec x_n$. 
\end{lemma}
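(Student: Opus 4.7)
The plan is to exploit the fact that, for a product of linear forms, the initial monomial depends only on the induced order on the variables and is insensitive to the ambient monomial order. First I would handle a single nonzero linear form $L=\sum_{i=1}^n c_ix_i$: if $j^*:=\max\{i \mid c_i\ne 0\}$, then $\operatorname{in}_\prec(L)=c_{j^*}x_{j^*}$ for any order $\prec$ with $x_1 \prec \cdots \prec x_n$, simply because the ranking of the degree-one monomials is already forced by the hypothesis on $\prec$. In particular $\operatorname{in}_\prec(L)=\operatorname{in}_<(L)$.

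Next I would promote this to a product $f=L_1L_2\cdots L_k$ of linear forms. Expanding, every monomial in the support of $f$ has the form $x_{a_1}x_{a_2}\cdots x_{a_k}$ with $a_i \le j^*_i$, where $j^*_i$ is the distinguished index for $L_i$. Since $\prec$ is compatible with multiplication, I can replace each $x_{a_i}$ by $x_{j^*_i}$ one factor at a time, each step weakly increasing the monomial under $\prec$. This gives
\[
\operatorname{in}_\prec(f)=\prod_{i=1}^k\operatorname{in}_\prec(L_i)=\prod_{i=1}^k\operatorname{in}_<(L_i)=\operatorname{in}_<(f),
\]
so the initial monomial of any product of linear forms is the same for $\prec$ and $<$. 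Applying this to every $g \in G$ (which by hypothesis is a product of linear forms), I conclude
\[
(\operatorname{in}_\prec(g) \mid g \in G) = (\operatorname{in}_<(g) \mid g \in G) = \operatorname{in}_<(I),
\]
where the last equality holds because $G$ is a Gr\"obner basis of $I$ with respect to $<$.

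The main remaining obstacle is to upgrade the trivial inclusion $(\operatorname{in}_\prec(g) \mid g \in G) \subseteq \operatorname{in}_\prec(I)$ to an equality; merely knowing that both sides are generated by products of linear forms is not enough. I would close this gap with a Hilbert-function comparison: since $I$ is graded, both $\operatorname{in}_<(I)$ and $\operatorname{in}_\prec(I)$ are graded monomial ideals, and each shares the Hilbert function of $S/I$. Consequently the two sides of the displayed inclusion, which by the previous paragraph equal $\operatorname{in}_<(I)$ and are contained in $\operatorname{in}_\prec(I)$ respectively, have identical Hilbert functions, forcing the inclusion to be an equality. This yields $(\operatorname{in}_\prec(g)\mid g\in G)=\operatorname{in}_\prec(I)$, showing that $G$ is a Gr\"obner basis of $I$ with respect to $\prec$.
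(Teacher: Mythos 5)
Your proof is correct and takes essentially the same approach as the paper: you show that the initial monomial of a product of linear forms is determined by the ordering of the variables alone (so $\operatorname{in}_\prec(g)=\operatorname{in}_<(g)$ for all $g\in G$), and then compare Hilbert functions of the two initial ideals to upgrade the resulting inclusion to an equality. The paper states the first fact without the detailed verification you supply, but the structure of the argument is identical.
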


The assumption that $I$ is graded is unnecessary, but we add it here for the simplicity.

\begin{proof}
Since $g \in G$ is a product of linear forms, we have $\Ini_{\prec}(g)=\Ini_<(g)$,  and hence 
$$\Ini_<(I) =(\Ini_< (g) \mid g \in G) =(\Ini_\prec (g) \mid g \in G) \subset\Ini_{\prec}(I).$$
Since $\Ini_\prec(I)$ and $\Ini_<(I)$ have the same Hilbert function (in fact, they have the same Hilbert function as $I$ itself), we have $\Ini_\prec(I) =\Ini_<(I)$. It implies that $G$ is a Gr\"obner basis of $I$ with respect to $\prec$. 
\end{proof}

\begin{corollary}\label{smallest & largest}
Let $\lambda \in [P_{n+l-1}]_{\ge l}$. 
With respect to a monomial order in which $x_1$ is either the smallest or the largest among the variables $x_1, \ldots, x_n$, $\{ f_T \mid T \in  \tab{l, \rho}, \rho \unlhd \lambda   \}$ is a Gr\"obner basis of $I_{l, \lambda}$.   
\end{corollary}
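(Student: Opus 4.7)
The plan is to split on whether $x_1$ is the smallest or the largest variable under $\prec$ and, in both cases, apply Lemma~\ref{product of linear forms} after transporting along a variable-permuting automorphism $\phi$ of $S$. The target set $G := \{ f_T \mid T \in \tab{l, \rho}, \rho \unlhd \lambda \}$ contains the standard-tableau Gr\"obner bases from the earlier theorems and is contained in $I_{l, \lambda}$; since any set of elements in an ideal that contains a Gr\"obner basis is itself a Gr\"obner basis, it suffices to produce \emph{some} Gr\"obner basis of $I_{l, \lambda}$ lying inside $G$.

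\emph{Case 1: $x_1$ smallest.} Pick the permutation $\tau \in \cS_{n-1}$ (extended by $\tau(1)=1$) such that $x_{\tau(1)} \prec x_{\tau(2)} \prec \cdots \prec x_{\tau(n)}$ lists the variables in increasing $\prec$-order, and let $\phi$ be the ring automorphism $x_k \mapsto x_{\tau(k)}$. Because $\tau$ permutes the entries $2, \ldots, n$ of any $T \in \tab{l, \rho}$ to yield $\tau \cdot T \in \tab{l, \rho}$ with $\phi(f_T) = \pm f_{\tau \cdot T}$, we have $\phi(G) = G$ up to signs and $\phi(I_{l, \lambda}) = I_{l, \lambda}$. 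The pullback order $\prec'$ defined by $m \prec' m'$ iff $\phi(m) \prec \phi(m')$ satisfies $x_1 \prec' x_2 \prec' \cdots \prec' x_n$. Corollary~\ref{Cor to main1} makes $G$ a Gr\"obner basis of $I_{l, \lambda}$ for lex $<$ with $x_1 < \cdots < x_n$, so Lemma~\ref{product of linear forms} upgrades this to $\prec'$, and transporting back via $\phi$ yields the Gr\"obner basis property for $\prec$.

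\emph{Case 2: $x_1$ largest.} Here we first work in the opposite setup of Section~3. By Theorem~\ref{main1.5}, the enlarged set $G'' := \{ f_T \mid T \in \tab{\rho, l}, \rho \unlhd \lambda \}$ is a Gr\"obner basis of $I_{\lambda, l}$ for lex $x_1 < \cdots < x_n$. Writing the $\prec$-order on variables as $x_{i_1} \prec \cdots \prec x_{i_n}$ (so $i_n = 1$), define $\phi(x_k) := x_{i_k}$; then $\phi$ bijects $\tab{\rho, l}$ with $\tab{l, \rho}$ — the $l$-fold entry $n$ is swapped with the $l$-fold entry $1$ via $\phi(x_n) = x_1$, while the remaining entries $1, \ldots, n-1$ are relabelled by $k \mapsto i_k$ — giving $\phi(G'') = G$ up to signs and $\phi(I_{\lambda, l}) = I_{l, \lambda}$. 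The pullback $\prec'$ again puts $x_1 \prec' x_2 \prec' \cdots \prec' x_n$, so Lemma~\ref{product of linear forms} and $\phi$-transport complete the case.

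The main technical point, specific to Case~2, is that $\phi$ does not lie in $\cS_{n-1}$ because it sends $x_n$ to $x_1$; one must check that the induced permutation of the entry-multisets $\{1, 2, \ldots, n-1, n^l\} \to \{1^l, 2, \ldots, n\}$ preserves the no-two-copies-in-the-same-column condition, so that indeed $\phi \cdot T \in \tab{l, \rho}$ whenever $T \in \tab{\rho, l}$. This follows immediately from $\phi$ being a bijection on the variables: two cells in a column of $T$ hold the symbol $n$ iff the corresponding cells in $\phi \cdot T$ hold the symbol $1$, and the forbidden configurations are interchanged consistently.
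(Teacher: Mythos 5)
Your proof is correct and follows the same route as the paper: in both cases you use the $\cS_{n-1}$-symmetry of $I_{l,\lambda}$ to relabel the variables $x_2,\ldots,x_n$, invoke Lemma~\ref{product of linear forms} to pass from the lex order to an arbitrary order with the same variable ordering, and then apply Corollary~\ref{Cor to main1} (resp.\ Theorem~\ref{main1.5} transported by the automorphism exchanging the roles of $1$ and $n$). The paper states this more tersely ("we may assume our monomial order is the lexicographic order\ldots"), whereas you make the relabeling automorphism $\phi$ and the pullback order $\prec'$ explicit, but the argument is the same.
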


Since we consider several monomial orders, we have to treat $\tab{l, \lambda}$, not $\stab{l, \lambda}$. 

\begin{proof}
First, we consider the case $x_1$ is the smallest among $x_1, \ldots, x_n$. Since the ideal $I_{l, \lambda}$ is symmetric for variables $x_2, \ldots,x_n$, and Specht polynomials are products of linear forms, we may assume that our monomial order is the lexicographic order with $x_1 < \cdots < x_n$ by Lemma~\ref{product of linear forms}, and the assertion follows from 
Corollary~\ref{Cor to main1}. Similarly, if $x_1$ is the largest, we may assume that our monomial order is the lexicographic order with $x_1 > \cdots > x_n$, and the assertion follows from Theorem~\ref{main1.5}.  
\end{proof}

\begin{example}
For $\lambda=(3,3)$, $I_{2,\lambda}$ is generated by 3 elements of degree 3. With respect to a monomial order in which $x_1$ is the smallest, $\ini{I_{2,\lambda}}$ is minimally generated by 3 elements of degree 3 and 2 elements of degree 4. On the other hand, with respect to an order in which $x_1$ is the largest,  $\ini{I_{2,\lambda}}$ is minimally generated by 3 elements of degree 3, 3 elements of degree 4, and an element of degree 6. Computer experiment suggests that $\ini{I_{l, \lambda}}$ with respect to an order  in which $x_1$ is the smallest requires fewer generators. 
\end{example}

\begin{problem}
With the notation of Corollary~\ref{smallest & largest}, is $\{ f_T \mid T \in  \tab{l, \rho}, \rho \unlhd \lambda  \}$ a universal   Gr\" obner basis of $I_{l, \lambda}$?
\end{problem}

We have computed several partitions $\lambda$ up to $n=8$ using SageMath and {\it Macaulay2}, and we have not found a counter example yet.     

\section{A generalization of the case $\#Y_1 \ge 2$ and $\#Y_2=\cdots =\#Y_l=1$}
In this section, we fix a positive integer $m$ with $1 \le m \le n$, and set 
$$\Delta_m:=\Delta(\{1, \ldots, m\})=\prod_{1 \le i < j \le m}(x_i-x_j).$$
For $T \in \tab{l, \lambda}$ with $[P_{n+l-1}]_{\ge l}$, set 
$$f_{m,T}:= \operatorname{lcm}\{ f_T, \Delta_m \} \in S \quad \text{and} \quad I_{l,m,\lambda}:=( f_{m,T} \mid T \in \tab{l, \lambda}) \subset S.$$
Note that $I_{l,1,\lambda}=I_{l,\lambda}$ and $I_{l,n,\lambda}=(\Delta_n)$.

\begin{example}
Even if $l=1$, $I_{l,m,\lambda}$ is not a radical ideal in general, while their generators are squarefree products of linear forms $(x_i-x_j)$.  For example, if $\lambda=(2,2)$, we have 
$$I_{1,3,\lambda}=(  \Delta_3\cdot(x_1-x_4), \Delta_3\cdot(x_2-x_4), \Delta_3\cdot (x_3-x_4)),$$
where $\Delta_3= (x_1-x_2)(x_1-x_3)(x_2-x_3)$. (Note that an analog of Lemma~\ref{stab} does not hold here. So we have to consider a non-standard tableau also to generate $I_{l,m,  \lambda}$.) 
Clearly, $\Delta_3 \not \in I_{1,3,\lambda}$, but we can show that $\Delta_3 \in \sqrt{I_{1,3,\lambda}}$ by Lemma~\ref{radical inclusion} below. 
Moreover, the statement corresponding to Lemma~\ref{inclusion} does not hold for  $I_{l,m,\lambda}$. In fact, if $\lambda=(2,2)$ and $\mu=(2,1,1)$, then $\mu \lhd \lambda$, but $I_{1,3,\mu}=(\Delta_3) \not \subset I_{1,3,\lambda}$.  
\end{example}

However, we have the following. 

\begin{lemma}\label{radical inclusion}
For $\lambda, \mu \in [P_{n+l-1}]_{\ge l}$ with $\lambda \unrhd \mu$, we have $\sqrt{I_{l, m, \lambda}} \supset I_{l, m, \mu}$. 
\end{lemma}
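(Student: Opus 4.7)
The plan is to prove the slightly stronger statement that for each $T' \in \tab{l,\mu}$ the square $f_{m,T'}^2$ lies in $I_{l,m,\lambda}$; since $I_{l,m,\mu}$ is generated by the polynomials $f_{m,T'}$, this immediately gives $I_{l,m,\mu} \subset \sqrt{I_{l,m,\lambda}}$. The two ingredients would be the inclusion $I_{l,\mu} \subset I_{l,\lambda}$ from Lemma~\ref{inclusion}, and the UFD identity $\operatorname{lcm}(a,b)\cdot\gcd(a,b)=ab$ in $S$, together with the fact that both $f_T$ and $\Delta_m$ are squarefree products of linear forms.

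The first step is to establish the auxiliary membership $f_{T'}\cdot\Delta_m \in I_{l,m,\lambda}$. By Lemma~\ref{inclusion}, $f_{T'} \in I_{l,\lambda}$, so one may expand $f_{T'}=\sum_{i} h_i\, f_{T_i}$ with $h_i \in S$ and $T_i \in \tab{l,\lambda}$. Multiplying by $\Delta_m$ and applying the lcm-gcd identity, $f_{T_i}\Delta_m = f_{m,T_i}\cdot \gcd(f_{T_i},\Delta_m) \in (f_{m,T_i}) \subset I_{l,m,\lambda}$, so each summand $h_i\, f_{T_i}\Delta_m$ lies in $I_{l,m,\lambda}$, and hence so does $f_{T'}\Delta_m$.

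The second step extracts $f_{m,T'}^2$ from this. Setting $g:=\gcd(f_{T'},\Delta_m)$, the lcm-gcd identity gives $f_{T'}\Delta_m = g\,f_{m,T'}$. Since $g \mid f_{T'}$ and $f_{T'} \mid f_{m,T'}=\operatorname{lcm}(f_{T'},\Delta_m)$, the quotient $q:=f_{m,T'}/g$ is a polynomial, and
$$f_{m,T'}^2 \;=\; q\cdot(g\,f_{m,T'}) \;=\; q\cdot(f_{T'}\Delta_m) \;\in\; I_{l,m,\lambda},$$
so $f_{m,T'} \in \sqrt{I_{l,m,\lambda}}$ as required.

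I do not foresee a real obstacle: the argument reduces to bookkeeping with $\operatorname{lcm}$ and $\gcd$ once Lemma~\ref{inclusion} is invoked. The one point deserving a bit of care is the divisibility $g \mid f_{m,T'}$ underlying the squaring trick, which crucially uses that $f_{m,T}$ is defined as $\operatorname{lcm}(f_T,\Delta_m)$ rather than the bare product $f_T\cdot\Delta_m$; it is precisely the squarefreeness of $f_T$ and $\Delta_m$ that lets one move freely between $\operatorname{lcm}/\gcd$ relations and divisibility in $S$.
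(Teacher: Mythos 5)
Your proposal is correct and follows essentially the same route as the paper: expand $f_{T'}$ using Lemma~\ref{inclusion}, multiply by $\Delta_m$ to conclude $\Delta_m f_{T'}\in I_{l,m,\lambda}$, then observe that $\Delta_m f_{T'}$ divides $f_{m,T'}^2$. The only difference is that you spell out the lcm/gcd bookkeeping explicitly where the paper states the divisibility facts without comment.
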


\begin{proof}
It suffices to show that $f_{m, T} \in \sqrt{I_{l, m, \lambda}}$ for all $T \in \tab{l, \mu}$.  By Lemma~\ref{inclusion}, there are some $k \in \NN$, $T_1, \ldots, T_k \in \tab{l,\lambda}$ and $g_1, \ldots, g_k \in S$ such that 
$f_T=\sum g_i f_{T_i}$. Multiplying $\Delta_m$ to both sides, we have $$\Delta_m \cdot f_T=\sum g_i \cdot (\Delta_m \cdot f_{T_i}).$$
Since $f_{m,T_i}$ divides $\Delta_m \cdot f_{T_i}$, we have $\Delta_m \cdot f_T \in I_{l,m,\lambda}$. However, since $\Delta_m \cdot f_T$ divides $(f_{m,T})^2$, we have   $(f_{m,T})^2 \in I_{l,m,\lambda}$.
\end{proof}

For a {\it lower} filter $\Fc \subset [P_{n+l-1}]_{\ge l}$, set 
$$G_{l, m, \Fc} := \{ f_{m,T} \mid T \in \tab{l, \lambda}, \lambda \in \Fc \} \quad \text{and} \quad  I_{l,m,\Fc} := (G_{l, m, \Fc})=\sum_{\lambda \in \Fc } I_{l,m,\lambda}.$$
For an {\it upper} filter  $\Fc \subset [P_{n+l-1}]_{\ge l}$ with the lower filter $\Fc^\cmpl:=  [P_{n+l-1}]_{\ge l} \setminus \Fc$, we consider the ideal 
$$J_{l, m, \Fc} := (\Delta_m) \cap J_{l, \Fc} \ (= (\Delta_m) \cap I_{l, \Fc^\cmpl}).$$
Since both $(\Delta_m)$ and $J_{l, \Fc}$ are radical ideals, so is $J_{l, m, \Fc}$. Since  $J_{l,m,\Fc} \subset (\Delta_m)$,  the codimension of $J_{l,m,\Fc}$ for $m \ge 2$ is 1 (unless $\Fc=[P_{n+l-1}]_{\ge l}$, equivalently, $J_{l,m,\Fc}=0$).

\begin{theorem}\label{main2}
Let $\Fc \subsetneq [P_{n+l-1}]_{\ge l}$ be a lower filter, and $\Fc^\cmpl:=  [P_{n+l-1}]_{\ge l} \setminus \Fc$ its complement. Then $G_{l, m, \Fc}$ is a Gr\"obner basis of $J_{l, m, \Fc^\cmpl}$. Hence $J_{l, m, \Fc^\cmpl}=I_{l, m, \Fc}$, and $I_{l, m, \Fc}$ is a radical ideal. 
\end{theorem}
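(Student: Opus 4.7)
The plan is a straightforward reduction to Theorem~\ref{main1}, resting on the key combinatorial identity
\[
\ini{f_{m,T}} \;=\; \operatorname{lcm}\bigl(\ini{f_T},\,\ini{\Delta_m}\bigr)
\]
for every column standard $T\in\tab{l,\lambda}$.

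First I would verify $G_{l,m,\Fc}\subset J_{l,m,\Fc^\cmpl}$: by construction $f_{m,T}$ is divisible both by $f_T\in I_{l,\lambda}\subset I_{l,\Fc}=J_{l,\Fc^\cmpl}$ (using Corollary~\ref{Cor to main1}) and by $\Delta_m$, hence lies in $(\Delta_m)\cap J_{l,\Fc^\cmpl}=J_{l,m,\Fc^\cmpl}$.

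For the Gr\"obner basis property, I would take an arbitrary nonzero $f\in J_{l,m,\Fc^\cmpl}$ and produce $T$ with $\ini{f_{m,T}}$ dividing $\ini{f}$. Since $f\in(\Delta_m)$ and this principal ideal has initial ideal $(\ini{\Delta_m})$, the monomial $\ini{\Delta_m}$ divides $\ini{f}$. Since also $f\in J_{l,\Fc^\cmpl}=I_{l,\Fc}$ and $G_{l,\Fc}$ is a Gr\"obner basis of $I_{l,\Fc}$ by Theorem~\ref{main1}, there exists $T\in\stab{l,\lambda}$ with $\lambda\in\Fc$ such that $\ini{f_T}$ divides $\ini{f}$. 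Hence $\operatorname{lcm}(\ini{f_T},\ini{\Delta_m})$ divides $\ini{f}$, and the identity above then gives $\ini{f_{m,T}}\mid\ini{f}$.

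To justify the identity, I compute the exponent of each $x_j$ on both sides, using \eqref{in(f)} and the fact that $f_T$ and $\Delta_m$ are products of distinct linear forms $x_i-x_j$ with $i<j$, each contributing $x_j$ to the initial monomial. For $j\le m$ every such linear form already appears in $\Delta_m$, so the $x_j$-exponent of $\ini{f_{m,T}}$ is $j-1$; on the other hand, since the $d_j-1$ entries strictly above $j$ in its column of $T$ are distinct elements of $\{1,\dots,j-1\}$, one has $d_j-1\le j-1$, and $\max(d_j-1,j-1)=j-1$ agrees. For $j>m$, $\Delta_m$ contributes nothing to the $x_j$-exponent, while the $f_T$-part gives $d_j-1$ on both sides. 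This yields the identity.

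Finally, the subset $\{f_{m,T}\mid T\in\stab{l,\lambda},\,\lambda\in\Fc\}$ of $G_{l,m,\Fc}$ is already a Gr\"obner basis of $J_{l,m,\Fc^\cmpl}$, and so therefore is $G_{l,m,\Fc}$. In particular $\operatorname{in}(I_{l,m,\Fc})=\operatorname{in}(J_{l,m,\Fc^\cmpl})$, which combined with the inclusion $I_{l,m,\Fc}\subset J_{l,m,\Fc^\cmpl}$ from the first step forces $I_{l,m,\Fc}=J_{l,m,\Fc^\cmpl}$; this common ideal is radical as an intersection of the radical ideals $(\Delta_m)$ and $J_{l,\Fc^\cmpl}$. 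The one point that is not automatic is the identity $\ini{f_{m,T}}=\operatorname{lcm}(\ini{f_T},\ini{\Delta_m})$: for arbitrary products of linear forms the lcm operation (taking a union of distinct factors) does \emph{not} commute with taking initial monomials, but the rigid structure of $\Delta_m$---it contains every $x_i-x_j$ with $i<j\le m$---is exactly what forces it here.
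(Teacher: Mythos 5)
Your proof is correct, and it takes a genuinely different route from the paper's. The paper repeats the inductive strategy of Theorem~\ref{main1}: it proves Lemma~\ref{keylemma3}, a variant of Lemma~\ref{keylemma} asserting that the coefficients $g_i$ of $f = \sum g_i x_n^i \in J_{l,m,\Fc}$ land in $J_{l,m,\Fc_{d+1}}$ (the extra divisibility by $\Delta_m$ inherited from $f \in (\Delta_m)$), and then runs induction on $n-m$. You bypass the induction entirely, reducing directly to Theorem~\ref{main1} together with the trivial Gr\"obner basis $\{\Delta_m\}$ of $(\Delta_m)$, glued by the identity $\ini{f_{m,T}} = \operatorname{lcm}(\ini{f_T}, \ini{\Delta_m})$ for column-standard $T$. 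Your verification of that identity is correct and isolates the genuinely nontrivial point: $\ini{}$ does not commute with $\operatorname{lcm}$ of squarefree products of linear forms in general, but it does here because $\Delta_m$ contains every $(x_i-x_j)$ with $i<j\le m$, so for $j\le m$ all factors $(x_a-x_j)$ of $f_T$ (which necessarily have $a<j\le m$) are already present in $\Delta_m$, while for $j>m$ the $\Delta_m$-contribution is vacuous. The step concluding $I_{l,m,\Fc}=J_{l,m,\Fc^\cmpl}$ from the containment and the equality of initial ideals is also sound. Net effect: your argument is shorter and more self-contained, trading the paper's re-run of the inductive machinery for a one-line combinatorial observation, and it makes transparent why the two Gr\"obner bases remain compatible under intersection of ideals (something false in general) thanks to the rigid structure of $\Delta_m$.
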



Let us prepare the proof of Theorem~\ref{main2}.

\begin{lemma}
\label{keylemma3}
Let $\Fc \subset [P_{n+l-1}]_{\ge l}$ be an upper filter, and let $f$ be a polynomial in $J_{l,m, \Fc}$
of the form
$$
f= g_d x_n^d  + \cdots +  g_1 x_n +g_0,
$$
where $g_0,\dots,g_d \in K[x_1,\dots,x_{n-1}]$ and $g_d\ne 0$.
If $m <n$, then $g_0,\dots,g_d$ belong to $J_{l, m, \Fc_{d+1}}$. 
\end{lemma}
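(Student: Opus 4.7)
The plan is to reduce this to Lemma~\ref{keylemma} by combining the two defining conditions of $J_{l,m,\Fc}=(\Delta_m)\cap J_{l,\Fc}$ separately. First, since $J_{l,m,\Fc} \subset J_{l,\Fc}$, the hypothesis immediately gives $f \in J_{l,\Fc}$, so applying Lemma~\ref{keylemma} to $f$ regarded as an element of $J_{l,\Fc}$ produces $g_0,\dots,g_d \in J_{l,\Fc_{d+1}}$. This handles the ``vanishing'' half of the conclusion for free.

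The remaining task is to show each $g_i$ lies in $(\Delta_m)$. Here is where the hypothesis $m<n$ enters: we have $\Delta_m \in K[x_1,\dots,x_m] \subset K[x_1,\dots,x_{n-1}]$, so $\Delta_m$ is free of the variable $x_n$. Since $f \in (\Delta_m)$, write $f=\Delta_m \cdot h$ with $h \in S$, and expand $h=h_d x_n^d+\cdots+h_1 x_n+h_0$ with $h_i \in K[x_1,\dots,x_{n-1}]$. Comparing coefficients of $x_n^i$ on both sides of $\Delta_m h=\sum_{i=0}^d g_i x_n^i$ yields $g_i=\Delta_m\cdot h_i$ for every $i$, so each $g_i$ is divisible by $\Delta_m$.

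Combining the two steps, $g_i \in (\Delta_m)\cap J_{l,\Fc_{d+1}}=J_{l,m,\Fc_{d+1}}$ for $i=0,\dots,d$, which is exactly the claim. There is no real obstacle here; the only point deserving attention is the assumption $m<n$, which is essential because once $\Delta_m$ involves $x_n$ the $x_n$-graded expansion of $\Delta_m h$ no longer passes $\Delta_m$ coefficient-wise into the $g_i$, and the ideal-theoretic reduction to Lemma~\ref{keylemma} breaks down.
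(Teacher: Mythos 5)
Your proof is correct and follows essentially the same strategy as the paper: split the condition $J_{l,m,\Fc}=(\Delta_m)\cap J_{l,\Fc}$, handle the $J_{l,\Fc}$ part by Lemma~\ref{keylemma}, and use $m<n$ to show each $g_i$ is divisible by $\Delta_m$. The only (minor) difference is that for the divisibility step the paper uses a vanishing argument (if $a_i=a_j$ for $i<j\le m$ then $f(\bm a,\alpha)=0$ for all $\alpha$, hence $g_i(\bm a)=0$, hence $\Delta_m\mid g_i$), whereas you compare coefficients in the $x_n$-expansion of $f=\Delta_m h$ directly, which is a slightly more elementary way to reach the same conclusion.
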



\begin{proof}
Here we use the same notation as in the proof of Lemma~\ref{keylemma}. 
Take $\bm a=(a_1, \ldots, a_{n-1}) \in K^{n-1}$. Since $f \in (\Delta_m)$, if $a_i =a_j$ for some $1 \le i < j \le m$, then 
$f(\bm a, \alpha)=0$ for all $\alpha \in K$, and hence $g_i(\bm a)=0$ for all $i$. 
It means that each $g_i$ can be divided by $\Delta_m$ in $K[x_1, \ldots, x_{n-1}]$.   So it remains to show that $g_i \in J_{l, \Fc_{d+1}}$, but it follows from Lemma~\ref{keylemma}, since $f \in J_{l, \Fc}$.
\end{proof}

\noindent{\it The proof of Theorem~\ref{main2}.} First, we show that $G_{l, m, \Fc} \subset J_{l, m, \Fc^\cmpl}$. For any $f_{m,T}\in G_{l, m, \Fc}$, it is clear that $f_{m,T}\in (\Delta_m)$, and we have $f_{m,T} \in (f_T) \subset J_{l, \Fc^\cmpl}$ by Theorem~\ref{main1}. Hence $f_{m,T} \in J_{l, m, \Fc^\cmpl}$. 

So it remains to show that, for any $0 \ne f \in J_{l, m, \Fc^\cmpl}$, there is some $f_{m,T} \in G_{l, m, \Fc}$ such that $\ini{f_{m,T}}$ divides $\ini{f}$, but it can be done  by induction on $n-m$ (we fix $m$) in the same way as in the proof of Theorem~\ref{main1}, while we use Lemma~\ref{keylemma3} instead of Lemma~\ref{keylemma}.
\qed 

\bigskip

The following corollary immediately follows from Theorem~\ref{main2}.


\begin{corollary}\label{I_{l,m,lambda}}
For $\lambda \in [P_{n+l-1}]_{\ge l}$, 
$$\bigcup_{\substack{\mu \in [P_{n+l-1}]_{\ge l} \\ \mu \unlhd \lambda}}G_{l,m,\mu}$$
is a  Gr\"obner basis of $\sqrt{I_{l,m, \lambda}}=J_{l,m,\Fc}$, where $\Fc$ is the upper filter $\{\nu \in  [P_{n+l-1}]_{\ge l}   \mid \nu \not \! \! \unlhd \lambda\}$. In particular, 
$$\sqrt{I_{l,m, \lambda}}=\sum_{\substack{\mu \in [P_{n+l-1}]_{\ge l} \\ \mu \unlhd \lambda}}I_{l,m, \mu}.$$
\end{corollary}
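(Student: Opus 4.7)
The plan is to derive the corollary as a direct combination of Theorem~\ref{main2} and Lemma~\ref{radical inclusion}, applied to the principal lower filter associated with $\lambda$. Specifically, set
$$\Fc := \{\mu \in [P_{n+l-1}]_{\ge l} \mid \mu \unlhd \lambda\},$$
which is a lower filter whose complement $\Fc^\cmpl$ is exactly the upper filter $\{\nu \mid \nu \not\!\!\unlhd \lambda\}$ appearing in the statement. By the very definitions of $G_{l,m,\Fc}$ and $I_{l,m,\Fc}$, we have
$$G_{l,m,\Fc} = \bigcup_{\mu \unlhd \lambda} G_{l,m,\mu} \quad\text{and}\quad I_{l,m,\Fc} = \sum_{\mu \unlhd \lambda} I_{l,m,\mu}.$$

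Next, I would apply Theorem~\ref{main2} to this $\Fc$: it tells us simultaneously that $G_{l,m,\Fc}$ is a Gr\"obner basis of $J_{l,m,\Fc^\cmpl}$, that $J_{l,m,\Fc^\cmpl} = I_{l,m,\Fc}$, and that this ideal is radical. Therefore, once one identifies $\sqrt{I_{l,m,\lambda}}$ with $I_{l,m,\Fc}$, every assertion in the corollary follows.

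For the identification $\sqrt{I_{l,m,\lambda}} = I_{l,m,\Fc}$, I would argue by two inclusions. Since $\lambda \in \Fc$, the containment $I_{l,m,\lambda} \subset I_{l,m,\Fc}$ is immediate, and because $I_{l,m,\Fc}$ is radical by Theorem~\ref{main2}, passing to radicals gives $\sqrt{I_{l,m,\lambda}} \subset I_{l,m,\Fc}$. Conversely, Lemma~\ref{radical inclusion} gives $I_{l,m,\mu} \subset \sqrt{I_{l,m,\lambda}}$ for each $\mu \unlhd \lambda$, and summing these inclusions over $\mu \in \Fc$ yields $I_{l,m,\Fc} \subset \sqrt{I_{l,m,\lambda}}$.

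There is no real obstacle here, since the corollary is just a packaging of Theorem~\ref{main2} specialized to a principal lower filter, combined with Lemma~\ref{radical inclusion} to pass between the Li-Li-type ideal and its radical. The only care required is to ensure that the indexing conventions (writing $\Fc^\cmpl$ for the upper filter defining $J_{l,m,\Fc^\cmpl}$ versus $\Fc$ for the lower filter defining $I_{l,m,\Fc}$) are matched correctly so that the Gr\"obner basis claim, the radicality, and the sum decomposition are read off from the same application of Theorem~\ref{main2}.
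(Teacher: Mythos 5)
Your proposal is correct and is the intended argument; the paper simply labels it as ``immediate'' from Theorem~\ref{main2} without spelling out the two-inclusion argument for $\sqrt{I_{l,m,\lambda}}=I_{l,m,\Fc_\lambda}$, which you have correctly filled in by invoking Lemma~\ref{radical inclusion} for $I_{l,m,\Fc_\lambda}\subset\sqrt{I_{l,m,\lambda}}$ and the radicality from Theorem~\ref{main2} for the reverse inclusion. You have also correctly flagged the genuine subtlety here, namely that unlike in Section~2 one does \emph{not} have $I_{l,m,\lambda}=I_{l,m,\Fc_\lambda}$ (the paper notes that the analogue of Lemma~\ref{inclusion} fails), so the detour through radicals and Lemma~\ref{radical inclusion} is actually necessary rather than cosmetic.
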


\begin{remark}
If $\lambda=(\lambda_1, \ldots, \lambda_p) \in P_{n+l-1}$ is of the form  $\lambda_1=\cdots =\lambda_{p-1}=k-1$ for some $k >l$, then our  $\sqrt{I_{l, m, \lambda}} \ ( =\sum_{\mu \unlhd \lambda} I_{l,m, \mu})$ coincides with the  Li-Li ideal $I_\Yc$ for $\Yc=(Y_1, Y_2, \ldots, Y_{k-1})$ with $Y_1=\{1, 2, \ldots, m\}$, $Y_2=\cdots =Y_l=\{1\}$ and $Y_{l+1}=\cdots =Y_{k-1}=\emptyset$ in the notation of the Introduction.  
\end{remark}

\begin{proposition}
$I_{l,m,\lambda}$ is a radical ideal for $m \le 2$. 
\end{proposition}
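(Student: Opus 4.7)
The case $m = 1$ is immediate: since $\Delta_1$ is the empty product, $f_{1,T} = f_T$ and $I_{l,1,\lambda} = I_{l,\lambda}$, which is radical by Corollary~\ref{Cor to main1}. The real content is $m = 2$. By Corollary~\ref{I_{l,m,lambda}}, $\sqrt{I_{l,2,\lambda}} = \sum_{\mu \unlhd \lambda} I_{l,2,\mu}$, so radicality reduces to showing $I_{l,2,\mu} \subset I_{l,2,\lambda}$ whenever $\mu \unlhd \lambda$. By induction on the dominance order I may assume $\lambda$ covers $\mu$, and then it suffices to verify $f_{2,T} \in I_{l,2,\lambda}$ for every column-standard $T \in \tab{l,\mu}$ (in this convention all $l$ copies of $1$ sit in row $1$).

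The plan is to revisit the identity \eqref{inclusion expansion} from the proof of Lemma~\ref{inclusion}. With $A = \{a_1,\dots,a_k\}$ the $j$-th column and $B$ the $j'$-th column of $T$ (with $j<j'$), and $T_i \in \tab{l,\lambda}$ obtained from $T$ by moving $a_i$ from column $j$ to column $j'$, it reads
\[
f_T = \sum_{k-k' \le i \le k} (-1)^{i-k+k'}\, f_{T_i} \cdot \prod_{1 \le i' < k-k'}(x_{a_{i'}} - x_{a_i}).
\]
Recall that $f_{2,T} = f_T$ or $(x_1-x_2) f_T$ according as $1$ and $2$ share a column of $T$ or not.

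If $1$ and $2$ lie in different columns of $T$, I multiply the identity by $(x_1-x_2)$: each factor $(x_1-x_2) f_{T_i}$ is then visibly a multiple of $f_{2,T_i}$ (it equals $f_{2,T_i}$ when $(x_1-x_2) \nmid f_{T_i}$, and $f_{2,T_i} = f_{T_i}$ trivially divides $(x_1-x_2) f_{T_i}$ otherwise), so $f_{2,T} \in I_{l,2,\lambda}$. Suppose instead $1$ and $2$ share a column $c$ of $T$, so $f_{2,T} = f_T$. I split on $c$: if $c \notin \{j,j'\}$, or $c = j'$, then $1$ and $2$ still share column $c$ in $T_i$, so $(x_1-x_2) \mid f_{T_i}$ and the term visibly lies in $I_{l,2,\lambda}$. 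If $c = j$, column-standardness forces $a_1 = 1$ and $a_2 = 2$; whenever $i \ne 2$ the moved entry $a_i$ is neither $1$ nor $2$, so $1$ and $2$ are preserved in column $j$ of $T_i$, and again we are done.

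The delicate subcase, and the main obstacle, is $c = j$ with $i = 2$: the moved entry is $2$, forcing $k - k' = 2$, and the coefficient $\prod_{1 \le i' < 2}(x_{a_{i'}} - x_{a_2})$ collapses to $(x_{a_1} - x_{a_2}) = (x_1 - x_2)$. Since $2$ has now moved to column $j'$ while $1$ remains in column $j$, we have $(x_1-x_2) \nmid f_{T_i}$ and hence $(x_1-x_2) f_{T_i} = f_{2,T_i}$; the whole term is precisely $\pm f_{2,T_i}$, which lies in $I_{l,2,\lambda}$. The proof therefore succeeds only because the coefficient coming from the straightening identity \eqref{diff product} supplies the missing $(x_1 - x_2)$ for free; this single-factor miracle cannot be repeated for $m \ge 3$, consistent with the non-radicality of $I_{1,3,(2,2)}$ displayed earlier.
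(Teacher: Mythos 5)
Your proof is correct and follows essentially the same route as the paper's: reduce via Theorem~\ref{main2} (equivalently Corollary~\ref{I_{l,m,lambda}}) to showing $f_{2,T}\in I_{l,2,\lambda}$, then run the expansion~\eqref{inclusion expansion}, splitting on whether $1$ and $2$ share a column, and observing that when $1,2\in A$ and $i=2$ the straightening coefficient supplies exactly the missing factor $x_1-x_2$. One small imprecision: in the $c=j$, $i=2$ subcase you assert $(x_1-x_2)\nmid f_{T_2}$, which fails if a copy of $1$ also sits in column $j'$ (i.e.\ $1\in B$); but in that event $1$ and $2$ share column $j'$ in $T_2$, so $f_{2,T_2}=f_{T_2}$ and the term $\pm(x_1-x_2)f_{T_2}$ is still a multiple of $f_{2,T_2}$, so the conclusion stands --- the paper isolates this as ``the crucial case $1\notin B$'' precisely to sidestep this wrinkle.
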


\begin{proof}
The case $m=1$ follows from Theorem~\ref{main1}. So we treat the case $m=2$. 
By Theorem~\ref{main2}, it suffices to show that $f_{2,T} \in I_{l,2, \lambda}$ for all $T \in \tab{l, \mu}$ with $\mu \unlhd \lambda$. If the letters 2 and (some copy of) 1 are not in the same column of $T$, then we have $f_{2,T}=(x_1-x_2)f_T$, and if they are in the same column, then  we have $f_{2,T}=f_T$. We first treat the former case. Since $I_{l, \mu} \subset I_{l, \lambda}$ by Lemma~\ref{inclusion}, there are $g_1, \ldots, g_k \in S$ and $T_1, \ldots, T_k \in \tab{l, \lambda}$ such that $f_T=\sum_{i=1}^k g_i f_{T_i}$. Multiplying $(x_1-x_2)$ to the both sides, we have 
$$f_{2,T}=(x_1-x_2)f_T =\sum_{i=1}^k g_i \cdot (x_1-x_2)f_{T_i}.$$
Since $f_{2,T_i}$ divides $(x_1-x_2)f_{T_i}$, we have $f_{2,T} \in I_{l,2, \lambda}$. So the case when 1 and 2 are in the same column (equivalently, $f_{2,T}=f_T$) remains. We may assume that $\lambda$ covers $\mu$, and we want to modify the argument of the proof of  Lemma~\ref{inclusion}, which shows that $I_{l, \mu} \subset I_{l, \lambda}$. In the sequel, we use the same notation as there. 

The crucial case is that $1,2 \in A$ (we may assume that $a_1=1, a_2=2$) and $1 \not \in B$. In fact, in other cases, it is easy to see that  $f_{2,T_i}=f_{T_i}$ for all $i$.  By \eqref{inclusion expansion}, we have 
$$f_T=\sum_{k-k' \le i \le k} (-1)^{i-k+k'} (x_1-x_{a_i})f_{T_i}$$
and $T_i \in \tab{l, \lambda}$ for all $i$. 
For $i \ge 3$,  the letters 1 and 2 stay in the same column of $T_i$, and we have $f_{2,T_i}=f_{T_i}$. So the case $k-k' \ge 3$ is easy, and we may assume that $k-k'=2$. Then, among $T_2, \ldots, T_k$, only $T_2$ does {\it not} have 1 and 2 in the same column. Hence
\begin{eqnarray*}
f_{2,T}=f_T&=& (x_1-x_2) f_{T_2} + 
\sum_{3 \le i \le k} (-1)^i (x_1-x_{a_i})f_{T_i} \\
&=& f_{2, T_2} + 
\sum_{3 \le i \le k} (-1)^i (x_1-x_{a_i})f_{2, T_i}  \in I_{l,2, \lambda}.  
\end{eqnarray*}
\end{proof}

\section*{Acknowledgements}
We would like to thank Professor Hiroshi Teramoto for helpful advice for computer experiments.  We also thank Professor Hidefumi Ohsugi for the quick proof of Lemma~\ref{product of linear forms}. Finally, we are grateful for the anonymous reviewers for their valuable comments.

\end{document}